\documentclass[reqno,12pt,letterpaper]{amsart}
\usepackage[proof]{sdmacros}

\DeclareMathOperator{\Sp}{Sp}

\title[Bounds on the fractal uncertainty exponent and a spectral gap]%
{Bounds on the fractal uncertainty exponent\\ and a spectral gap}

\author{Alain Kangabire}
\address{Department of Mathematics, Massachusetts Institute of Technology, 77 Massachusetts Ave, Cambridge, MA 02139}
\email{kanga27@mit.edu}

\begin{document}

\begin{abstract}
We prove two results on Fractal Uncertainty Principle (FUP) for discrete Cantor sets
with large alphabets. First, we give an example of an alphabet with
dimension $\delta\in (\frac12,1)$ where the FUP exponent is exponentially small as
the size of the alphabet grows. Secondly, for $\delta\in (0,\frac12]$ we show
that a similar alphabet has a large FUP exponent, arbitrarily close to
the optimal upper bound of~$\frac12-\frac\delta2$, if we dilate the Fourier transform
by a factor satisfying a generic Diophantine condition. We give
an application of the latter result to spectral gaps for open quantum baker's maps.
\end{abstract}

\maketitle

\section{Introduction}

Open quantum maps have been widely used to model the scattering behavior of open quantum systems,
with relations between the two settings established by Nonnenmacher--Sj\"ostrand--Zworski~\cite{NSZ11}. In this paper, we consider the toy model of quantum open baker's maps 
\begin{equation}
    \label{eqn: quantum open baker's map}
    B_N = \mathcal{F}^*_N \begin{pmatrix}
         \chi_{N/M} \mathcal{F}_{N/M} \chi_{N/M} &  & \\
         & \ddots & \\
         & & \chi_{N/M} \mathcal{F}_{N/M} \chi_{N/M}
    \end{pmatrix}
    I_{\mathcal{A},M}.
\end{equation}
Here $N \in \mathbb N$ is taken to be a multiple of $M \in \mathbb N$, $\mathcal F_N$ is the unitary discrete Fourier transform on $\mathbb C^N$ defined in~\eqref{eqn: discrete fourier} below, and
$\chi_{N/M} \in \mathbb C^{N/M}$ is the discretization of a cutoff function $\chi \in C^{\infty}_{\mathrm c}((0,1);[0,1])$ given by 
\begin{equation}
    \label{eqn: discretization of cutt off}
    \chi_{N/M}(l) = \chi \left( \frac{l}{N/M}\right) \text{ for } 0 \leq l < N/M .
\end{equation}
The set $\mathcal A \subset \mathbb Z_M$, where $\mathbb Z_M = \{0, 1, \dots, M-1\}$, is called an alphabet, and $I_{\mathcal{A},M}$ is a diagonal matrix where the $j^{th}$ diagonal entry is equal to $1$ if $\left \lfloor \frac{j}{N/M} \right \rfloor \in \mathcal{A}$ and 0 otherwise. For example,
the choice $M=3$, $\mathcal A=\{0,2\}$ gives the map
$$
B_N=\mathcal F_N^* \begin{pmatrix} \chi_{N/3}\mathcal F_{N/3}\chi_{N/3} & 0 & 0 \\
0 & 0 & 0\\
0 & 0 & \chi_{N/3}\mathcal F_{N/3}\chi_{N/3}\end{pmatrix}.
$$
Open quantum baker's maps have previously been studied by Keating--Novaes--Prado--Sieber~\cite{KNPS06}, Keating--Nonnenmacher--Novaes--Sieber~\cite{KNNS08}, Nonnenmacher--Zworski~\cite{NZ05,NZ07}, and Dyatlov--Jin~\cite{DJ17, DJ18}; see the introduction to~\cite{DJ17}
for a more detailed review of the literature.

As explained for example in~\cite{NZ05}, we can view $\mathbb C^N$ as
the Hilbert space of quantum states with the classical phase space given by
the torus $\mathbb T^2$ and the semiclassical parameter $h=(2\pi N)^{-1}$. Then the maps $B_N:\mathbb C^N\to\mathbb C^N$
are quantizations of open quantum baker's relations on~$\mathbb T^2$.
The dynamics of these relations naturally gives rise to discrete Cantor sets
\begin{equation}
    \label{def: discrete cantor set}
    \mathcal{C}_k = \{ a_0 + a_1 M + \dots + a_{k-1}M^{k-1} : a_j \in \mathcal{A} \} \subset \mathbb Z_{M^k}
\end{equation}
with dimension
\begin{equation}
    \label{eqn: definition of delta}
    \delta = \frac{\log |\mathcal{A}|}{\log M} ,
\end{equation}
where $|\mathcal{A}|$ is the size of $\mathcal A$.

Dyatlov--Jin~\cite{DJ17} considered the case when $N=M^k$ is a power of $M$ and proved a \emph{spectral gap}, namely an upper bound on the spectral radius of $B_N$:
$$
\limsup_{N\to \infty}\max\{|\lambda|\colon\lambda\in\Sp(B_N)\}\leq M^{-\beta}
$$
where $\Sp(B_N)\subset\mathbb C$ is the spectrum of $B_N$ and $\beta$ is the \emph{fractal uncertainty exponent}, defined as follows:
\begin{equation}
    \label{eqn: fractal exponent}
    \beta = -\lim_{k \rightarrow \infty} \frac{\log \| \mathbbm{1}_{\mathcal{C}_k} \mathcal{F}_{M^k} \mathbbm{1}_{\mathcal{C}_k}\| }{k \log M} .
\end{equation}
Here $\mathbbm{1}_{\mathcal{C}_k}$ is the multiplication operator by the indicator function of the Cantor set $\mathcal C_k$. In other words, $\beta$ is the maximal number such that for any $\epsilon>0$
we have the \emph{fractal uncertainty principle} as $k\to \infty$
\begin{equation}
  \label{eqn: FUP0}
\| \mathbbm{1}_{\mathcal{C}_k} \mathcal{F}_{N} \mathbbm{1}_{\mathcal{C}_k}\|\leq C_\epsilon N^{-\beta+\epsilon},\quad N:=M^k,
\end{equation}
which intuitively says that no function can be localized both in position and frequency close to the fractal set $\mathcal C_k$. It is easy to show that~\eqref{eqn: FUP0} holds with $\beta=0$ and with $\beta=\frac12-\delta$, and~\cite{DJ17} proved that for $0<\delta<1$, one can always improve on these bounds. Moreover, the bound~\eqref{eqn: FUP0} cannot hold with
$\beta>\frac12-\frac\delta 2$. This gives the inequalities
\begin{equation}
  \label{eqn: beta-bound}
\max\left(0, \frac{1}{2} - \delta\right)\ <\ \beta\ \leq\ \frac12-\frac\delta 2.
\end{equation}
This is a model case of more general Fractal Uncertainty Principles which have
found many applications to quantum chaos, see Dyatlov--Zahl~\cite{DyZa16}, Bourgain--Dyatlov~\cite{BD18}, Dyatlov--Jin~\cite{DyJi18}, Dyatlov--Zworski~\cite{DyZw20}, Dyatlov--Jin--Nonnenmacher~\cite{DJN22}, Dyatlov--Jézéquel~\cite{DyJe23}, Cohen~\cite{C22, C23}, Schwartz~\cite{S24}, and Athreya--Dyatlov--Miller~\cite{ADM24}.


A natural question is how small the improvement $\beta-\max(0,\frac12-\delta)$ in~\eqref{eqn: beta-bound} can be when $M\to\infty$. In general applications of the Fractal Uncertainty Principle
it is often useful to know what is the largest value of $\beta$ to expect in a given situation,
since it determines for example the size of the spectral gap~\cite{DyZa16, DyZw20} 
or the lower bound on mass of an eigenfunction in a set~\cite{DyJi18, DJN22, DyJe23, S24, ADM24}.
For Cantor sets, this has been studied in~\cite{DJ17}:
\begin{itemize}
\item when $0<\delta<\frac12$, the improvement is bounded above and below by negative powers of~$M$:
the lower bound is given by~\cite[Corollary~3.5]{DJ17} and the upper bound for a specific family of alphabets, by~\cite[Proposition~3.17]{DJ17}.
\item when $\delta=\frac12$, the improvement is bounded above and below by constants times $(\log M)^{-1}$:
the lower bound is given by~\cite[Proposition~3.12]{DJ17} and the upper bound for a specific family of alphabets, again by~\cite[Proposition~3.17]{DJ17}.
\item when $\frac12<\delta<1$, there is a lower bound on the improvement which is exponentially small in~$M$,
given by~\cite[Corollary~3.7]{DJ17}.
\end{itemize}

This leaves out the problem of an upper bound on the improvement in~\eqref{eqn: beta-bound} when $\frac12<\delta<1$. In this case, Dyatlov~\cite[Conjecture 4.4]{D19} conjectured that there exists a sequence of Cantor sets whose fractal uncertainty exponent converges to $0$, with numerical evidence suggesting an exponential convergence with respect to $M$. Hu~\cite[Proposition~3.2]{H21} constructed Cantor sets $\mathcal{C}_k$ with dimension $\frac{1}{2} <\delta <1$ where the fractal uncertainty exponent is bounded by 
\begin{equation*}
    \beta \leq \frac{C}{\log M}
\end{equation*}
for some constant $C$. Additionally, ~\cite[Theorem~3.4]{H21} showed that there exists $\mathcal C_1 \subset \mathbb Z_M$ with dimension $\frac{1}{2} <\delta <1$ and some constant $C$ such that
\begin{equation*}
     \frac{-\log \| \mathbbm{1}_{\mathcal{C}_1} \mathcal{F}_M \mathbbm{1}_{\mathcal{C}_1}\| }{ \log M}\leq \frac{C}{\log M} e^{- \frac{M^{2 \delta -1}}{\log M}} .
\end{equation*}

Our first result is a construction of Cantor sets $\mathcal{C}_k$ with dimension $\frac{1}{2} < \delta < 1$ and having a fractal uncertainty exponent that is exponentially small in $M$:
\begin{theo}
    \label{thm: upper bound on beta for delta bigger than half}
    Fix $\frac{1}{2} < \delta < 1$. There exists $M_0$ such that if $M \geq M_0$ and the alphabet $\mathcal{A} = \left\{l \in \mathbb Z_M: \left|l-\frac{M}{2}\right| \leq \frac{M^{\delta}}{2} \right\}$, then 
    \begin{equation}
        \label{eqn: upper bound on beta for delta bigger than half}
            \beta \leq 170\, e^{-\frac{\pi}{4} M^{2\delta - 1}},
    \end{equation}
    where $\beta$ is defined in \eqref{eqn: fractal exponent}.
    Note that the dimension of the corresponding Cantor set is $\frac{\log(|\mathcal A|)}{\log M} = \delta \pm O\left( \frac{1}{M^{\delta} \log M}\right)$.
\end{theo}
Even though \cite[Proposition 3.17]{DJ17} and Theorem \ref{thm: upper bound on beta for delta bigger than half} show that we can construct Cantor sets with arbitrarily small improvement in~\eqref{eqn: beta-bound}, 
Dyatlov \cite[Conjecture 4.5]{D19} conjectured that if we dilate the Cantor sets $\mathcal C_k$ by a generic factor, then the fractal uncertainty exponent should be much larger. This is
related to gaps for open quantum baker's maps~\eqref{eqn: quantum open baker's map} with more general $N$ than powers of~$M$.
Consider $N \in \mathbb N$ which is just a multiple of $M$. There exist $\alpha \in \mathbb Q$ and $k \in \mathbb N$ such that 
\begin{equation} 
    \label{eqn: range for alpha}
    N = \alpha M^k \text{ and } 1 \leq \alpha < M.
\end{equation}
We define the dilated sets 
\begin{equation}
    \label{def: dilated cantor sets}
    \mathcal C_k(N) = \left\{ \lceil \alpha j \rceil: j \in \mathcal C_k \right\} ,
\end{equation}
where $\alpha$ is given by \eqref{eqn: range for alpha} and $\mathcal C_k$ by \eqref{def: discrete cantor set}.

Our second result is a version of~\cite[Conjecture~4.5]{D19}, giving a lower bound
on the fractal uncertainty exponent when $0<\delta\leq \frac12$ and
 $\mathcal C_k$ is the Cantor set that leads to the small fractal uncertainty exponent in \cite[Proposition 3.17]{DJ17} or Theorem \ref{thm: upper bound on beta for delta bigger than half} up to a shift by an integer:
\begin{theo}
\label{thm: FUP for dilated discrete fourier transform}
For any $\epsilon > 0$, there exist $M_0, k_0 \in \mathbb N$ such that the following holds.
    Let $M\in \mathbb N$ be such that $M\geq M_0$ and $\mathcal A = \left\{0, 1, \dots , M^{\delta} -1 \right\}$, where $M^{\delta} \in \mathbb N$ and $0 < \delta \leq \frac{1}{2}$. Suppose that $\alpha \in \mathbb Q$, $1 \leq \alpha < M$, and $k \in \mathbb N$ are such that $N = \alpha M^k \in M \mathbb Z$ and $k\geq k_0$. Take an irreducible fraction $\frac{b}{q} \in \mathbb Q$ satisfying 
    \begin{equation} 
        \label{eqn: rational approx of alpha over M}
        0 < q \leq M^{\delta} \text{ and } \left| \frac{\alpha}{M} - \frac{b}{q} \right| <\frac{1}{qM^{\delta}}.
    \end{equation}
Then
\begin{equation}
    \label{eqn: FUP for dilated discrete fourier transform}
    \left\| \mathbbm{1}_{\mathcal{C}_k(N)} \mathcal{F}_{N} \mathbbm{1}_{\mathcal{C}_k(N)} \right\| \leq N^{-\left(\frac{1}{2} - \delta + \frac{\gamma}{2}\right) + \epsilon} \;,
\end{equation}
where $\mathcal F_N$ is defined in \eqref{eqn: discrete fourier}, $\mathcal C_k(N)$ is given by \eqref{def: dilated cantor sets}, and $\gamma = \frac{\log q}{\log M} \leq \delta$.
\end{theo}
Note that the existence of $\frac bq$ satisfying~\eqref{eqn: rational approx of alpha over M}
follows from the Dirichlet approximation theorem.

The improvement $\frac\gamma2$ in~\eqref{eqn: FUP for dilated discrete fourier transform}
is large if $\alpha$ is poorly approximable by rationals, which happens for generic $\alpha$.
More precisely, for any $q\in [1,M^\delta]$ the number of $\alpha\in M^{1-k}[M^{k-1},M^k]$
such that~\eqref{eqn: rational approx of alpha over M} holds is bounded above by
$6M^{k-\delta}$. This implies that for any $\epsilon>0$, all but
a $6M^{-\epsilon}\xrightarrow{M\to\infty} 0$ proportion of the allowed values of $\alpha$
satisfy~\eqref{eqn: rational approx of alpha over M}
for some $q\geq M^{\delta-\epsilon}$. For those $\alpha$, the value of $\gamma$ in Theorem~\ref{thm: FUP for dilated discrete fourier transform} is at least $\delta-\epsilon$,
thus~\eqref{eqn: FUP for dilated discrete fourier transform} shows that the fractal uncertainty exponent satisfies
$$
\beta\geq \frac12-\frac\delta2-{3\epsilon\over 2},
$$
which for $\epsilon$ small is close to the theoretical upper bound in~\eqref{eqn: beta-bound}.

As an application of Theorem~\ref{thm: FUP for dilated discrete fourier transform}, we get a spectral gap for baker's maps $B_N$ with size of the matrix $N$ that is just a multiple of $M$. 
\begin{theo}
    \label{thm: spectral gap for baker's maps}
     Fix $\epsilon>0$ and
     assume that $M$ and $\mathcal A$ are as in Theorem \ref{thm: FUP for dilated discrete fourier transform}. Then for all $N\in M\mathbb Z$ we have
          \begin{equation}
         \label{eqn: spectral gap for baker's maps}
      \max \left\{|\lambda|: \lambda \in\Sp(B_N) \right\} \leq M^{-\left(\frac{1}{2} - \delta + \frac{\gamma}{2}\right) + \epsilon} + r(N) ,
     \end{equation}
     where $\delta$ and $\gamma$ are as in Theorem \ref{thm: FUP for dilated discrete fourier transform} and $r(N)\to 0$ as $N\to\infty$.
\end{theo}
Theorem \ref{thm: spectral gap for baker's maps} follows from an argument in \cite[Section 5.1]{DJ18} where Dyatlov--Jin showed that the discrete fractal uncertainty principle \eqref{eqn: FUP for dilated discrete fourier transform} implies the spectral gap \eqref{eqn: spectral gap for baker's maps} for corresponding baker's maps.

\bigskip
\noindent\textbf{Acknowledgements.}
I would like to thank Semyon Dyatlov for invaluable discussions and Alex Cohen for suggesting to look for a function $u$ of the form \eqref{def: u as convolution}. This work was supported by Semyon Dyatlov's NSF CAREER grant DMS-1749858.

\section{Preliminaries} 

We will use the discrete Fourier transform
\begin{equation}
    \label{eqn: discrete fourier}
    \mathcal{F}_N(u)(j) = \frac{1}{\sqrt{N}} \sum_{l = 0}^{N-1} \exp{ \left(-\frac{2\pi i j l}{N}\right)} u(l) ,
\end{equation}
 which is unitary with respect to the norm  
 \begin{equation}
    \label{eqn: definition of the norm of function on C_N}
    \|u\|^2 = \sum_{l = 0}^{N-1} |u(l)|^2 .
\end{equation}
The proof of Theorem \ref{thm: upper bound on beta for delta bigger than half} relies on finding a unit norm function $u_k \in \mathbb C^{M^k}$ such that $\| \mathbbm{1}_{\mathcal{C}_k} \mathcal{F}_{M^k} \mathbbm{1}_{\mathcal{C}_k} u_k \|$ is sufficiently large. 

We construct $u_k$ as a convolution of carefully chosen functions. Given an alphabet $\mathcal{A} \subset \mathbb Z_M$, let $f \in \mathbb C^M$ be a function on $\mathbb Z_M$ with $\supp f \subset \mathcal{A}$. For $ 1 \leq j < k$, define functions $f_{M^j} \in \mathbb{C}^{M^k}$ by  
\begin{equation}
    \label{def: dilated g by powers of M}
    f_{M^j}(n) = 
    \begin{cases}
         f\left(\frac{n}{M^j}\right) &\text{ if } n = M^j a \text{ for some } a \in \mathcal{A}\\
        0 &\text{ otherwise}
    \end{cases}
\end{equation}
Notice that $\supp f_{M^j} \subset M^j \mathcal{A}$. Set $u_1 = f$, and for $k \geq 2$, let
\begin{equation}
    \label{def: u as convolution}
    u_k = f * f_M * f_{M^2} * \cdots * f_{M^{k-2}}*f_{M^{k-1}} ,
\end{equation}
where $*$ is the convolution operator on $\mathbb C^N$, that is 
\begin{equation}
    \label{eqn: definition convolution}
    f*h(n) = \sum_{n_1 + n_2 = n\bmod N} f(n_1) h(n_2) .
\end{equation}
Observe that 
\begin{equation}
    \label{eqn: support of u in C_k}
    \supp u_k \subset \mathcal{C}_k.
\end{equation}
\begin{Remark}
    Sometimes, we will view a function $f$ on $\mathbb Z_M$ as a function on $\mathbb Z$ by extending $f$ to $\mathbb Z$ by zero.
\end{Remark}

We find a formula for $\mathcal{F}_{M^k}(u_k)$ similar to formula (3.36) in ~\cite{DJ17}. 
\begin{lemm}
    \label{lemma: discrete fourier transform of u given by convolution}
    Assume that $u_k$ is of the form \eqref{def: u as convolution}. Then 
    \begin{equation}
        \label{eqn: discrete fourier of u given by convolution}
        \mathcal{F}_{M^k}(u_k)(j) = \prod_{r = 1}^k G_f \left(\frac{j}{M^r}\right),
    \end{equation}
    where $j \in \mathbb Z_{M^k}$ and the 1-periodic function 
    \begin{equation}
        \label{def: inverse fourier series of g}
        G_f(x) = \frac{1}{\sqrt{M}} \sum_{l \in \mathbb Z} f(l) e^{-2 \pi i l x}.
    \end{equation}
\end{lemm}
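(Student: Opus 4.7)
The strategy is to turn the convolution in~\eqref{def: u as convolution} into a product using the convolution theorem for the discrete Fourier transform, then identify each factor with a value of $G_f$.

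First I would verify the convolution identity for~$\mathcal F_{M^k}$: from the definitions~\eqref{eqn: discrete fourier} and~\eqref{eqn: definition convolution}, a direct reordering of sums yields
\begin{equation*}
\mathcal F_{M^k}(g * h)(j) \;=\; \sqrt{M^k}\,\mathcal F_{M^k}(g)(j)\,\mathcal F_{M^k}(h)(j)
\end{equation*}
for any $g,h\in\mathbb C^{M^k}$. Iterating this identity on~\eqref{def: u as convolution} gives
\begin{equation*}
\mathcal F_{M^k}(u_k)(j)\;=\;(M^k)^{(k-1)/2}\prod_{r=0}^{k-1}\mathcal F_{M^k}(f_{M^r})(j),
\end{equation*}
with the convention $f_{M^0}=f$.

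Next I would compute each factor explicitly. Since $f_{M^r}$ is supported on $\{M^r a : a\in\mathcal A\}\subset\mathbb Z_{M^k}$ and $f_{M^r}(M^r a)=f(a)$, the definition of $\mathcal F_{M^k}$ gives
\begin{equation*}
\mathcal F_{M^k}(f_{M^r})(j)\;=\;\frac{1}{\sqrt{M^k}}\sum_{a\in\mathbb Z_M} f(a)\,\exp\!\left(-\frac{2\pi i j a}{M^{k-r}}\right)\;=\;\frac{1}{\sqrt{M^{k-1}}}\,G_f\!\left(\frac{j}{M^{k-r}}\right),
\end{equation*}
using $\supp f\subset\mathcal A\subset\mathbb Z_M$ and the definition~\eqref{def: inverse fourier series of g} of~$G_f$ (together with its 1-periodicity to make sense of rational arguments).

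Finally, I would collect the constants: the product over $r=0,\dots,k-1$ contributes the factor $(M^{k-1})^{-k/2}=M^{-k(k-1)/2}$, which exactly cancels the prefactor $(M^k)^{(k-1)/2}=M^{k(k-1)/2}$. Reindexing $r'=k-r$ turns the product $\prod_{r=0}^{k-1}G_f(j/M^{k-r})$ into $\prod_{r'=1}^{k}G_f(j/M^{r'})$, giving~\eqref{eqn: discrete fourier of u given by convolution}. The only step requiring mild care is the bookkeeping of the $\sqrt{M^k}$ factors, which turns out to cancel cleanly; there is no real obstacle here, the result is essentially a packaging of the convolution theorem applied to dilates of a single function.
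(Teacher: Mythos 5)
Your proof is correct and takes essentially the same route as the paper: the paper carries out the identical computation in one step by expanding the iterated convolution as a multi-sum over the digits $a_0,\dots,a_{k-1}\in\mathcal A$ and factoring it, whereas you invoke the convolution theorem and evaluate each $\mathcal F_{M^k}(f_{M^r})$ separately. Your bookkeeping of the normalization constants is accurate (the $(M^k)^{(k-1)/2}$ prefactor cancels the $k$ factors of $M^{-(k-1)/2}$), so no gap remains.
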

\begin{proof}
The support condition in the definition \eqref{def: dilated g by powers of M} of $f_{M^j}$  implies that
\begin{equation*}
    \begin{split}
        \mathcal{F}_{M^k}(u_k)(j) = \frac{1}{\sqrt{M^k}} \sum_{a_0 \in \mathcal{A}} \dots \sum_{a_{k-1} \in \mathcal{A}} \left(\prod_{r=0}^{k-1}f(a_r) e^{-2\pi i \frac{ja_r}{M^{k-r}}} \right). \\
    \end{split}
\end{equation*}
Additionally, notice that
\begin{equation*}
    \begin{split}
        \prod_{r=0}^{k-1} \left( \frac{1}{\sqrt{M}}  \sum_{a \in \mathcal{A}} f(a) e^{-2\pi i \frac{ja}{M^{k-r}}} \right) &=  \frac{1}{\sqrt{M^k}} \sum_{a_0 \in \mathcal{A}} \cdots \sum_{a_{k-1} \in \mathcal{A}} \left(\prod_{r=0}^{k-1}f(a_r) e^{-2\pi i \frac{ja_r}{M^{k-r}}} \right)\\
        & = \mathcal{F}_{M^k}(u_k)(j) ,
    \end{split}
\end{equation*}
which gives the desired result by recalling that $\supp f \subset \mathcal A$.
\end{proof}

\begin{lemm}
\label{lemma: eqn: L2 norm of f as sum of M equally spaced points of f}
    Let $f$ be a function on $\mathbb Z$ with support in $\mathbb Z_M$. Then for any $y \in \mathbb R$,
    \begin{equation}
    \label{eqn: L2 norm of f as sum of M equally spaced points of f}
     \sum_{j \in \mathbb{Z}_M} \left| G_f\left( \frac{j}{M} + y \right)\right|^2 = \left\| f \right\|^2
\end{equation}
\end{lemm}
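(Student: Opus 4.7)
The plan is to reduce the identity to the Plancherel theorem for the unitary discrete Fourier transform $\mathcal F_M$ on $\mathbb C^M$. First I would split the exponential in the definition of $G_f$, writing
$$
G_f\!\left(\frac{j}{M}+y\right) = \frac{1}{\sqrt{M}} \sum_{l=0}^{M-1} f(l)\, e^{-2\pi i l y}\, e^{-2\pi i j l/M},
$$
where the sum is restricted to $0\leq l<M$ because $\supp f\subset \mathbb Z_M$.

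Next, with $y$ fixed, I would introduce the modulated function $g\in\mathbb C^M$ defined by $g(l)=f(l)\,e^{-2\pi i l y}$ for $0\leq l<M$. The display above then says exactly
$$
G_f\!\left(\frac{j}{M}+y\right) = \mathcal F_M(g)(j), \qquad j\in\mathbb Z_M,
$$
by comparison with the definition \eqref{eqn: discrete fourier} of $\mathcal F_M$.

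Finally, since $\mathcal F_M$ is unitary on $\mathbb C^M$ with respect to the norm \eqref{eqn: definition of the norm of function on C_N}, Plancherel gives
$$
\sum_{j\in\mathbb Z_M}\left|G_f\!\left(\tfrac{j}{M}+y\right)\right|^2 = \sum_{j\in\mathbb Z_M}|\mathcal F_M(g)(j)|^2 = \|g\|^2,
$$
and since $|g(l)|=|f(l)|$ pointwise, $\|g\|^2=\|f\|^2$, which is the claimed identity. There is no real obstacle here; the only point to be careful about is that the support hypothesis on $f$ is what lets us truncate the sum over $\mathbb Z$ to a sum over $\mathbb Z_M$ and view $g$ as an element of $\mathbb C^M$ so that $\mathcal F_M$ applies.
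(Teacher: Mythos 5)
Your proof is correct and is essentially the paper's argument: both reduce the identity to unitarity of $\mathcal F_M$ after absorbing the shift $y$ into a modulation $f(l)\mapsto e^{-2\pi i l y}f(l)$, which preserves the norm. The only difference is cosmetic (you handle general $y$ in one step, while the paper first treats $y=0$ and then reduces to it), so nothing further is needed.
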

\begin{proof}
    From definition \eqref{def: inverse fourier series of g} of $G_f$ and definition  \eqref{eqn: discrete fourier} of the discrete Fourier transform, it follows that
    \begin{equation*}
    G_f\left(\frac{j}{M} \right) = \mathcal{F}_M \left(f\right)(j),
\end{equation*}
for $j \in \mathbb Z_M$. The unitarity of the discrete Fourier transform implies that
$$
\sum_{j \in \mathbb{Z}_M} \left| G_f\left( \frac{j}{M} \right)\right|^2 = \left\| f \right\|^2 \text{.}
$$
The above equality gives relation \eqref{eqn: L2 norm of f as sum of M equally spaced points of f} because for any fixed $y \in \mathbb R$, 
$$
G_f\left( \frac{j}{M} + y \right) = G_{f_y} \left( \frac{j}{M} \right),
$$
where $f_y(l) = e^{-2\pi i l y} f(l)$.
\end{proof}
\begin{Remark}
    If $u_k$ is of the form \eqref{def: u as convolution}, we can use Lemma \ref{lemma: discrete fourier transform of u given by convolution} and Lemma \ref{lemma: eqn: L2 norm of f as sum of M equally spaced points of f} to show that 
    \begin{equation}
        \label{eqn: norm squared of u_k}
        \| u_k \|^2 = \| f \|^{2k}.
    \end{equation}
    The starting point in the proof of equality \eqref{eqn: norm squared of u_k} is to notice that $\|u_k\|^2 = \| \mathcal F_{M^k} u_k\|^2$ by the unitarity of the discrete Fourier transform. The rest of the argument is similar to the proof of Lemma \ref{lemma: lower bound on norm of truncated fourier transform}, with minor modifications.
\end{Remark}

We define the Fourier transform as
\begin{equation}
    \label{eqn: Fourier transform}
    \hat{f}(\xi) = \int_{\mathbb R} e^{-ix \xi} f(x) dx,
\end{equation}
where $f \in L^2(\mathbb R)$.
\section{Discrete Cantor set with small fractal uncertainty exponent}

In this section, we prove Theorem \ref{thm: upper bound on beta for delta bigger than half} by constructing a unit norm function $u_k \in \mathbb{C}^{M^k}$ of the form \eqref{def: u as convolution} such that $\left\| \mathbbm{1}_{\mathcal{C}_k} \mathcal{F}_{M^k} u_k \right\|$ is large. 

Relation \eqref{eqn: support of u in C_k} says that $\supp u_k \subset \mathcal C_k$, so Lemma \ref{lemma: lower bound on norm of truncated fourier transform} gives an initial lower bound on the operator norm $\left\| \mathbbm{1}_{\mathcal{C}_k} \mathcal{F}_{M^k} \mathbbm{1}_{\mathcal{C}_k}  \right\|$.

\begin{lemm}
    \label{lemma: lower bound on norm of truncated fourier transform}
    If $u_k$ is of the form \eqref{def: u as convolution}, then
    \begin{equation} 
        \label{eqn: lower bound on operator norm of truncated fourier transform}
        \left\| \mathbbm{1}_{\mathcal{C}_k} \mathcal{F}_{M^k} u_k \right\|^2 \geq \left( Z_{\mathcal{A}}(f) \right)^k,
    \end{equation}
    where 
    \begin{equation}
        \label{def: minimum of translated FT of g on A}
        Z_{\mathcal{A}}(f) = \min_{0 \leq y \leq \frac{1}{M}} \sum_{a \in \mathcal{A}} \left| G_f\left( \frac{a}{M} + y \right) \right|^2
    \end{equation}
    and $G_f$ is defined in \eqref{def: inverse fourier series of g}.
\end{lemm}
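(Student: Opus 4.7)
The plan is to compute the norm $\|\mathbbm{1}_{\mathcal{C}_k} \mathcal{F}_{M^k} u_k\|^2$ explicitly using Lemma~\ref{lemma: discrete fourier transform of u given by convolution} and then apply the definition of $Z_{\mathcal{A}}(f)$ iteratively, one base-$M$ digit at a time.

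First I would parametrize $j \in \mathcal{C}_k$ as $j = a_0 + a_1 M + \cdots + a_{k-1} M^{k-1}$ with $a_0, \dots, a_{k-1} \in \mathcal{A}$. For each $1 \leq r \leq k$, compute
\begin{equation*}
\frac{j}{M^r} \;=\; \frac{a_0 + a_1 M + \cdots + a_{r-1} M^{r-1}}{M^r} + a_r + a_{r+1}M + \cdots + a_{k-1} M^{k-1-r}.
\end{equation*}
Since $G_f$ is $1$-periodic and the second term above is an integer, $G_f(j/M^r) = G_f\bigl(\tfrac{a_{r-1}}{M} + y_r\bigr)$ where
\begin{equation*}
y_r \;=\; \frac{a_0 + a_1 M + \cdots + a_{r-2} M^{r-2}}{M^r} \;\in\; \Bigl[0,\tfrac{1}{M}\Bigr).
\end{equation*}
The crucial point is that $y_r$ depends only on $a_0, \dots, a_{r-2}$, and in particular is independent of $a_{r-1}, \dots, a_{k-1}$.

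Combining this with Lemma~\ref{lemma: discrete fourier transform of u given by convolution}, I would then write
\begin{equation*}
\bigl\|\mathbbm{1}_{\mathcal{C}_k}\mathcal{F}_{M^k} u_k\bigr\|^2 \;=\; \sum_{a_0 \in \mathcal A}\Bigl|G_f\bigl(\tfrac{a_0}{M}+y_1\bigr)\Bigr|^2 \sum_{a_1 \in \mathcal A}\Bigl|G_f\bigl(\tfrac{a_1}{M}+y_2\bigr)\Bigr|^2 \cdots \sum_{a_{k-1} \in \mathcal A}\Bigl|G_f\bigl(\tfrac{a_{k-1}}{M}+y_k\bigr)\Bigr|^2.
\end{equation*}
The nested structure is legitimate precisely because of the independence observed above.

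Finally, I would peel off the sums from innermost to outermost. For each fixed choice of $a_0, \dots, a_{k-2}$ the value $y_k \in [0, 1/M)$ is determined, so the definition of $Z_{\mathcal A}(f)$ in~\eqref{def: minimum of translated FT of g on A} gives $\sum_{a_{k-1} \in \mathcal A}|G_f(a_{k-1}/M + y_k)|^2 \geq Z_{\mathcal A}(f)$. Substituting this lower bound and repeating the argument on the next sum (with $a_0, \dots, a_{k-3}$ fixed) yields another factor of $Z_{\mathcal A}(f)$, and iterating $k$ times produces $(Z_{\mathcal A}(f))^k$, proving~\eqref{eqn: lower bound on operator norm of truncated fourier transform}. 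No step here looks like a real obstacle; the only mild subtlety is bookkeeping the dependence of $y_r$ on earlier digits to justify reorganizing the sum as an iterated one.
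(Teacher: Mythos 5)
Your proposal is correct and is essentially the paper's argument: the paper runs the same computation as an induction on $k$, peeling off the most significant digit $a_{k-1}$ via $\mathcal C_k=\mathcal C_{k-1}+M^{k-1}\mathcal A$ and using the $1$-periodicity of $G_f$ together with $j'/M^k\in[0,\tfrac1M)$ to extract one factor of $Z_{\mathcal A}(f)$ per step, which is exactly your innermost-sum bound with the induction unrolled into a nested sum. The only presentational caveat is that your displayed ``product of sums'' must be read as an iterated sum (since $y_r$ depends on $a_0,\dots,a_{r-2}$), which you already acknowledge.
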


\begin{proof}
    This Lemma follows easily when $k=1$. Assume that $k \geq 2$. Lemma \ref{lemma: discrete fourier transform of u given by convolution} and the decomposition $\mathcal{C}_k = \mathcal{C}_{k-1} + M^{k-1} \mathcal{A}$ give 
    \begin{equation}
        \label{eqn: initial norm of u as convolution as a sum of products}
        \begin{split}
            \left\| \mathbbm{1}_{\mathcal{C}_k} \mathcal{F}_{M^k} u_k \right\|^2 =\sum_{j' \in \mathcal{C}_{k-1}} \sum_{a_{k-1} \in \mathcal{A}} \left(\prod_{r = 1}^k \left| G_f \left(\frac{j' + a_{k-1}M^{k-1}}{M^r}\right) \right|^2 \right). 
        \end{split}
    \end{equation}
    For $1 \leq r \leq k-1$, notice that 
    $$
    \frac{j' + a_{k-1}M^{k-1}}{M^r} = \frac{j'}{M^r} \mod \mathbb{Z}\,.
    $$
   Since $G_f$ is 1-periodic, equality \eqref{eqn: initial norm of u as convolution as a sum of products} becomes  
    \begin{equation*}
        \begin{split}
            \left\| \mathbbm{1}_{\mathcal{C}_k} \mathcal{F}_{M^k} u_k \right\|^2 & =   \sum_{j' \in \mathcal{C}_{k-1}} \left( \sum_{a_{k-1}  \in \mathcal{A}} \left| G_f \left(\frac{a_{k-1}}{M} + \frac{j'}{M^k}\right) \right|^2 \right) \left(\prod_{r = 1}^{k-1} \left| G_f \left(\frac{j'}{M^r}\right) \right|^2 \right) \\
            &\geq Z_{\mathcal{A}}(f) \sum_{j' \in \mathcal{C}_{k-1}}  \prod_{r = 1}^{k-1} \left| G_f \left(\frac{j'}{M^r}\right) \right|^2, 
        \end{split}
    \end{equation*}
    where the last inequality follows from Definition~\eqref{def: minimum of translated FT of g on A} of $Z_{\mathcal{A}}(f)$  and the inequality $0 \leq \frac{j'}{M^k} \leq \frac{1}{M}$ for $j' \in \mathcal C_{k-1}$. 
    
    See that 
    $$
\sum_{j' \in \mathcal{C}_{k-1}}    \prod_{r = 1}^{k-1} \left| G_f \left(\frac{j'}{M^r}\right) \right|^2  = \left\| \mathbbm{1}_{\mathcal{C}_{k-1}} \mathcal{F}_{M^{k-1}} u_{k-1} \right\|^2 ,
    $$
    so by induction we obtain this lemma. 
\end{proof}

Given $\delta \in \left( \frac{1}{2}, 1 \right)$, we want an alphabet $\mathcal A \subset \mathbb Z_M$ of dimension close to $\delta$ and a function $f$ on $\mathbb Z_M$ with $\supp f \subset \mathcal A$ such that the corresponding quantity $Z_{\mathcal A}\left(f\right)$ defined in \eqref{def: minimum of translated FT of g on A} is large. 

Fix $ \frac{1}{2} < \delta < 1$ and choose $M$ large enough such that $M^{\delta} > 2$. Set
\begin{equation}
    \label{def: choice of alphabet A}
    \mathcal{A} = \left\{l \in \mathbb Z_M: \left|l- \frac{M}{2}\right| \leq \frac{M^{\delta}}{2} \right\} \subset \mathbb Z_M,
\end{equation}
and one can check that
\begin{equation}
    \label{eqn: size of specific alphabet}
    \left| |\mathcal A| - M^{\delta} \right| \leq 1.
\end{equation}
As a consequence, a discrete Cantor set $\mathcal C_k$ having as alphabet the set $\mathcal A$ given by \eqref{def: choice of alphabet A} has dimension 
\begin{equation}
    \label{eqn: dimension of specific cantor set}
    \frac{\log(|\mathcal A|)}{\log M} = \delta + \frac{\log\left(1\pm M^{-\delta}\right)}{\log M} = \delta \pm O\left( \frac{1}{M^{\delta} \log M}\right).
\end{equation}

Next, define the following function on $\mathbb Z$
\begin{equation}
   \label{def: truncated discrete gaussian}
   g(l)= \frac{1}{\sqrt{M}} e^{- \frac{\pi(l - M/2)^2}{M}} e^{i\pi l}.
\end{equation}
The function $|g|$ is the restriction to $\mathbb Z$ of a Gaussian centered at $M/2$ and with standard deviation $\sqrt{M}$, up to a constant factor. The 1-periodic function  $G_g(x)$ defined by \eqref{def: inverse fourier series of g} is 
\begin{equation}
    \label{eqn: G_g for discrete gaussian}
    \begin{split}
        G_g(x) &= \frac{1}{\sqrt{M}} \sum_{l \in \mathbb Z} \frac{1}{\sqrt{M}} e^{- \frac{\pi(l - M/2)^2}{M}} e^{-2\pi il \left(x - \frac{1}{2}\right)} \\
        & = \frac{1}{\sqrt{M}}\sum_{k \in \mathbb Z} \int_{\mathbb R} \left( \frac{1}{\sqrt{M}} e^{- \frac{\pi(y - M/2)^2}{M}} e^{-2\pi i y \left(x - \frac{1}{2}\right)} \right) e^{-2\pi iy k} dy\\
        &= \frac{1}{\sqrt{M}} \sum_{k \in \mathbb Z} e^{- \pi M \left(x - \frac{1}{2} + k\right)^2} e^{- \pi i M \left( x - \frac{1}{2} + k \right) },
    \end{split}
\end{equation}
where the second equality follows from Poisson summation formula and the last equality from the computation $\int_{\mathbb R} \frac{1}{\sqrt{M}} e^{-\frac{\pi x^2}{M}} e^{-2\pi i x \xi} dx = e^{-\pi M \xi^2}$. If we disregard the complex phase, then $MG_g(x)$ is the periodization of a Gaussian centered at $\frac{1}{2}$ and with standard deviation $\frac{1}{\sqrt{M}}$, up to a constant factor. Figure \ref{fig: g and fourier series of g} illustrates the functions $g$, $G_g$, and $G_{g\mathbbm{1}_{\mathcal A}}$.

\begin{figure}
     \centering
     \begin{subfigure}[b]{0.45\textwidth}
         \centering
         \includegraphics[width=\textwidth]{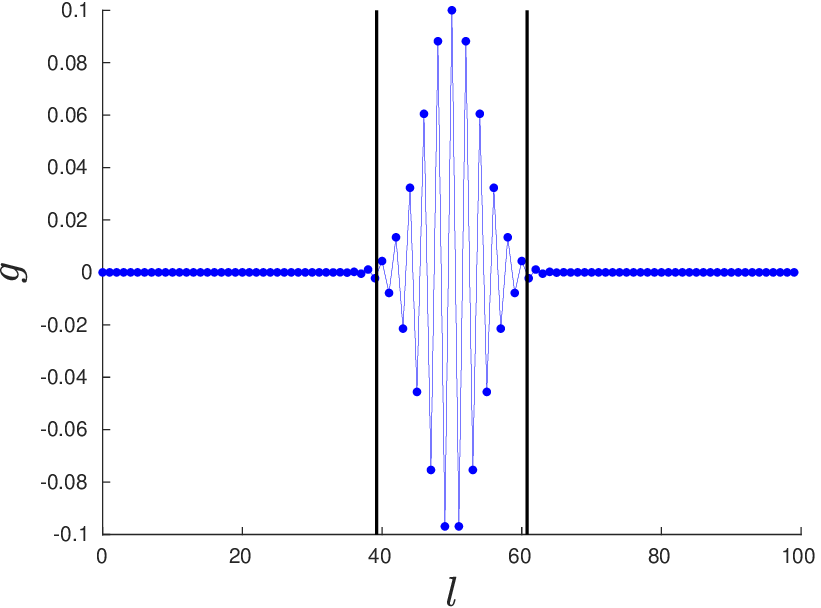}
         \caption{\label{fig: g}}
     \end{subfigure}
     \begin{subfigure}[b]{0.45\textwidth}
         \centering
         \includegraphics[width=\textwidth]{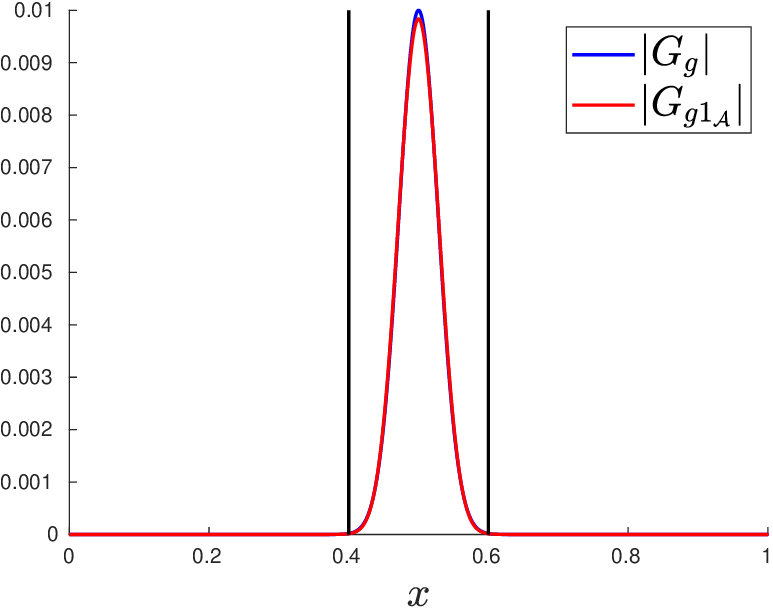}
         \caption{\label{fig: Fourier series of G}}
     \end{subfigure}
        \caption{
        \label{fig: g and fourier series of g}
        We take $M = 100$, $\delta = \frac{2}{3}$, and the alphabet $\mathcal A$ given by \eqref{def: choice of alphabet A}. Figure (A) plots the function $g$ defined in \eqref{def: truncated discrete gaussian}, and the region inside the two vertical black lines represents $g\mathbbm{1}_{\mathcal A}$. Figure (B) shows the functions $G_g$ and $G_{g \mathbbm 1_{\mathcal A}}$ given by \eqref{def: inverse fourier series of g}. The region between the two vertical black lines represents $G_g$ and $G_{g \mathbbm 1_{\mathcal A}}$ restricted on the set $\frac{1}{M}(\mathcal A + [0,1]) \subset [0,1]$.}
\end{figure}

\begin{lemm}
\label{lemma: final bound mass of f outiside subset S}
Let $\mathcal A$ and $g$ be given by \eqref{def: choice of alphabet A} and \eqref{def: truncated discrete gaussian} respectively. If $M$ is large enough and $0 \leq y \leq \frac{1}{M}$, then
    \begin{equation}
        \label{eqn: final bound mass of f outside subset S}
        \sum_{l \in \mathbb Z_M \setminus \mathcal A}  \left|G_{g\mathbbm{1}_{\mathcal A}}\left(\frac{l}{M} + y\right)\right|^2 \leq \frac{60}{\sqrt{M}} e^{-\frac{\pi}{4} M^{2\delta - 1}},
    \end{equation}
    where $G_{g\mathbbm{1}_{\mathcal A}}$ is defined by relation \eqref{def: inverse fourier series of g}.
\end{lemm}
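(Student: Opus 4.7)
The plan is to exploit two complementary features of the discrete Gaussian $g$. First, $g$ is essentially supported on $\mathcal A$, so $g\mathbbm{1}_{\mathcal A}$ differs from $g$ only by a Gaussian tail of size $\sim e^{-\pi M^{2\delta-1}/2}$. Second, by the Poisson summation formula \eqref{eqn: G_g for discrete gaussian}, $G_g$ is a periodization of a Gaussian centered at $x=1/2$ with standard deviation $M^{-1/2}$ (up to a unit-modulus phase), so $|G_g(l/M+y)|$ decays like $e^{-\pi M^{2\delta-1}/2}$ once $l$ leaves $\mathcal A$. Both effects are of the right order to yield the stated bound. Concretely, I would write
\begin{equation*}
g\mathbbm{1}_{\mathcal A} \;=\; g \;-\; g\mathbbm{1}_{\mathbb Z_M \setminus \mathcal A} \;-\; g\mathbbm{1}_{\mathbb Z \setminus \mathbb Z_M},
\end{equation*}
use $|a-b-c|^2 \leq 3(|a|^2+|b|^2+|c|^2)$ inside the sum in \eqref{eqn: final bound mass of f outside subset S}, and estimate the three resulting contributions separately.

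For the $G_g$ piece I would substitute \eqref{eqn: G_g for discrete gaussian} directly. Setting $m=l-M/2$ so that $|m|>M^{\delta}/2$, the $k=0$ term dominates because $|u+k|\geq 1/2$ when $|k|\geq 1$; this gives $|G_g(l/M+y)|\leq M^{-1/2}e^{-\pi(m+My)^2/M}+CM^{-1/2}e^{-\pi M/4}$. Squaring, summing over $l\in\mathbb Z_M\setminus\mathcal A$, and bounding $\sum_{|m|>M^{\delta}/2}e^{-2\pi(|m|-1)^2/M}$ by the standard Gaussian integral estimate $\int_{M^{\delta}/2-1}^{\infty}e^{-2\pi t^2/M}\,dt=O(M^{1-\delta}e^{-\pi M^{2\delta-1}/2})$ yields a contribution of order $M^{-\delta}e^{-\pi M^{2\delta-1}/2}$. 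For the middle piece, since $g\mathbbm{1}_{\mathbb Z_M \setminus \mathcal A}$ is supported in $\mathbb Z_M$, Lemma \ref{lemma: eqn: L2 norm of f as sum of M equally spaced points of f} applies and produces
\begin{equation*}
\sum_{l \in \mathbb Z_M}\bigl|G_{g\mathbbm{1}_{\mathbb Z_M \setminus \mathcal A}}(l/M+y)\bigr|^2 \;=\; \|g\mathbbm{1}_{\mathbb Z_M \setminus \mathcal A}\|^2 \;=\; \tfrac{1}{M}\sum_{\substack{l\in\mathbb Z_M\\ |l-M/2|>M^{\delta}/2}}e^{-2\pi(l-M/2)^2/M},
\end{equation*}
which is the same Gaussian tail. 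Finally, on the support of $g\mathbbm{1}_{\mathbb Z \setminus \mathbb Z_M}$ one has $|l-M/2|\geq M/2$, so the crude uniform bound $|G_f(x)|\leq M^{-1/2}\|f\|_{\ell^1}$ gives a contribution of order $M^{-1}e^{-\pi M/2}$, which is negligible because $\delta<1$ forces $M\geq M^{2\delta-1}$.

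Adding the three estimates produces a uniform bound of the form $C\,M^{-\delta}e^{-\pi M^{2\delta-1}/2}$ for the left-hand side of \eqref{eqn: final bound mass of f outside subset S}; since $\delta>1/2$, this is dominated by $60\,M^{-1/2}e^{-\pi M^{2\delta-1}/4}$ for $M$ sufficiently large, because the ratio $M^{\delta-1/2}\cdot e^{-\pi M^{2\delta-1}/4}$ tends to $0$ thanks to $2\delta-1>0$. I expect the main obstacle to be the routine-but-delicate bookkeeping of constants and of lower-order terms in the Gaussian exponent: in particular the $O(M^{\delta-1})$ cross-terms produced by the substitution $m\mapsto m+My$ must be absorbed cleanly, and the $k\neq 0$ harmonics from the Poisson sum must be tracked so that the final constant does not exceed $60$ once $M\geq M_0$.
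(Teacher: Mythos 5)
Your proposal is correct and follows essentially the same route as the paper: approximate $G_{g\mathbbm{1}_{\mathcal A}}$ by $G_g$ up to an exponentially small truncation error, use the Poisson summation formula \eqref{eqn: G_g for discrete gaussian} to see that $|G_g|$ is a periodized Gaussian of width $M^{-1/2}$ centered at $\frac12$, and sum the resulting Gaussian tail over $l\in\mathbb Z_M\setminus\mathcal A$. The only (harmless) variation is that you control the truncation error in $\ell^2$ via Parseval (Lemma \ref{lemma: eqn: L2 norm of f as sum of M equally spaced points of f}), splitting off the part of $g$ supported outside $\mathbb Z_M$ separately, whereas the paper bounds $\sup_x|G_g(x)-G_{g\mathbbm{1}_{\mathcal A}}(x)|$ pointwise by the $\ell^1$ norm of the same Gaussian tail; both give errors of the same order, and in either case the relaxation of the exponent from $\frac{\pi}{2}(1-2M^{-\delta})^2$ to $\frac{\pi}{4}$ absorbs all constants and polynomial factors once $M$ is large.
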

\begin{proof}

 For $x \in [0,1]$, Definition \eqref{def: inverse fourier series of g} of $G_g$ and $G_{g\mathbbm{1}_{\mathcal A}}$ and the triangle inequality give
    \begin{equation}
        \label{eqn: bound on error between inverse Fourier of Gaussian and truncated Gaussian}
        \begin{split}
            \left|G_g(x) - G_{g\mathbbm{1}_{\mathcal A}}(x)\right| &\leq \frac{1}{\sqrt{M}}\sum_{|l-M/2|>\frac{M^{\delta}}{2}} \frac{1}{\sqrt{M}} e^{- \frac{\pi(l - M/2)^2}{M}} \\
            &\leq \frac{1}{\sqrt{M}} \left( 1 + \frac{2}{\sqrt{M}} \right) e^{-\frac{\pi}{4}M^{2\delta - 1}} \\
            &\leq \frac{2}{\sqrt{M}}  e^{-\frac{\pi}{4}M^{2\delta - 1}},
        \end{split}
    \end{equation}
    where the second to last inequality follows from a change of variables ${|l - M/2| - \frac{M^{\delta}}{2}}$ and the upper bound $\sum_{|l|>0}  \frac{1}{\sqrt{M}}e^{-\frac{\pi l^2}{M}} \leq \int_{\mathbb R} \frac{1}{\sqrt{M}} e^{-\frac{\pi x^2}{M}} dx = 1$. We also choose $M\geq 4$ to make $\frac{2}{\sqrt{M}} \leq 1$. 

The inequality $(a+b)^2\leq 2(a^2+b^2)$ and the upper bound \eqref{eqn: bound on error between inverse Fourier of Gaussian and truncated Gaussian} give
\begin{equation} 
    \label{eqn: initial bound mass of f outside subset S}
    \begin{split}
        \sum_{l \in \mathbb Z_M \setminus \mathcal A}  \left|G_{g\mathbbm{1}_{\mathcal A}}\left(\frac{l}{M} + y\right)\right|^2 & \leq \frac{8}{M} e^{-\frac{\pi}{2}M^{2\delta - 1}}  + 2\sum_{l \in \mathbb Z_M \setminus \mathcal A}  \left|G_g\left(\frac{l}{M} + y\right)\right|^2.
    \end{split}
\end{equation}

Take $x \in [0,1]$, then equation \eqref{eqn: G_g for discrete gaussian} for $G_g(x)$ and the triangle inequality give
\begin{equation*}
    |G_g(x)| \leq e^{-\pi M \left( x - \frac{1}{2} \right)^2}\left(\sum_{l \in \mathbb Z} \frac{1}{\sqrt{M}} e^{-\pi M \left(l^2 - |l|\right)} \right),
\end{equation*}
where we used the inequality $l^2 + 2 \left(x -\frac{1}{2}\right)l \geq l^2 - |l|$ for $0 \leq x \leq 1$. When $|l|> 1$, we use inequality $l^2 - |l| \geq \left(|l| - 1\right)^2$ and get
\begin{equation}
    \label{eqn: upper bound on G_g}
    \begin{split}
        |G_g(x)| &\leq e^{-\pi M \left( x - \frac{1}{2} \right)^2} \left( \frac{3}{\sqrt{M}} + \sum_{|l| > 1} \frac{1}{\sqrt{M}} e^{-\pi M \left(|l|-1\right)^2} \right)\\
        &  \leq  \frac{4}{\sqrt{M}}e^{-\pi M \left( x - \frac{1}{2} \right)^2}, 
    \end{split}
\end{equation}
where the last inequality follows by noticing that the infinite sum $\sum_{|l|>0} \sqrt{M} e^{-\pi M l^2}$ is bounded above by ${\int_{\mathbb R} \sqrt{M} e^{-\pi M x^2} dx = 1}$.

The inequality \eqref{eqn: upper bound on G_g} gives
\begin{equation}
    \label{eqn: final bound mass of periodic gaussian outside subset S}
    \begin{split}
        \sum_{l \in \mathbb Z_M \setminus \mathcal A}  \left|G_g \left(\frac{l}{M} + y\right)\right|^2 & \leq  \sum_{|l-M/2| > \frac{M^{\delta}}{2}} \frac{16}{M} e^{-\frac{2 \pi\left(l-M/2 + yM\right)^2}{M}}\\
        & \leq \frac{16 \sqrt{2}}{\sqrt{M}} e^{-\frac{2\pi}{M}\left(\frac{M^{\delta}}{2} - 1\right)^2},
    \end{split}
\end{equation}
with the last inequality following from the same argument used to derive inequality \eqref{eqn: bound on error between inverse Fourier of Gaussian and truncated Gaussian} and the observation that $0 \leq yM \leq 1$.

We substitute in inequality \eqref{eqn: initial bound mass of f outside subset S} the bound obtained in inequality \eqref{eqn: final bound mass of periodic gaussian outside subset S}  to get 
\begin{equation}
    \label{eqn: intermediate bound mass of f outside subset S}
    \begin{split}
        \sum_{l \in \mathbb Z_M \setminus \mathcal A}  \left|G_{g\mathbbm{1}_{\mathcal A}}\left(\frac{l}{M} + y\right)\right|^2 & \leq \frac{8}{M} e^{-\frac{\pi}{2}M^{2\delta - 1}} + \frac{32 \sqrt{2}}{\sqrt{M}} e^{-\frac{2\pi}{M}\left(\frac{M^{\delta}}{2} - 1\right)^2}  \\
        & \leq \frac{60}{\sqrt{M}} e^{-\frac{\pi}{2}\left(1 - \frac{2}{M^{\delta}}\right)^2 M^{2\delta - 1}}\\
        & \leq \frac{60}{\sqrt{M}}  e^{-\frac{\pi}{4} M^{2\delta - 1}}, 
    \end{split}
\end{equation}
where the second to last inequality follows because 
$$
\frac{1}{M}\left(\frac{M^{\delta}}{2} - 1 \right)^2 = \frac{M^{2\delta-1}}{4}\left(1-\frac{2}{M^{\delta}}\right)^2 \leq \frac{M^{2\delta -1}}{4}.
$$ 
In the last inequality, we take $M$ large such that $1 - \frac{2}{M^{\delta}} \geq \frac{1}{2}$.
\end{proof}

Equipped with Lemma \ref{lemma: lower bound on norm of truncated fourier transform} and Lemma \ref{lemma: final bound mass of f outiside subset S}, we are ready to prove Theorem \ref{thm: upper bound on beta for delta bigger than half}.
\begin{proof}[Proof of Theorem \ref{thm: upper bound on beta for delta bigger than half}]
The remark made in Theorem \ref{thm: upper bound on beta for delta bigger than half} that
$$
\frac{\log(|\mathcal A|)}{\log M} = \delta \pm O\left( \frac{1}{M^{\delta} \log M}\right)
$$
is exactly equation \eqref{eqn: dimension of specific cantor set}, so we only need to show the upper bound on $\beta$ given by inequality \eqref{eqn: upper bound on beta for delta bigger than half}.

Let $f = g\mathbbm{1}_{\mathcal A} $, where $g$ is given by equation \eqref{def: truncated discrete gaussian}. Definition \eqref{def: minimum of translated FT of g on A} of $Z_{\mathcal A}( \bullet)$  gives

\begin{equation}
   \label{eqn: lower bound on Z_A(f)}
   \begin{split}
        Z_\mathcal{A}\left(f\right) &= \min_{0 \leq y \leq \frac{1}{M}} \left(   \sum_{l \in \mathbb{Z}_M} \left|G_f\left(\frac{l}{M} + y\right)\right|^2  -    \sum_{l \in \mathbb Z_M \setminus \mathcal{A}} \left|G_f\left(\frac{l}{M} + y\right)\right|^2 \right)\\
        & \geq \|f\|^2 \left(1 - \frac{60}{\|f\|^2 \sqrt{M}} e^{-\frac{\pi}{4} M^{2\delta - 1}} \right)\\
        & \geq \|f\|^2 \left(1 - 170\, e^{-\frac{\pi}{4} M^{2\delta - 1}}\right),
   \end{split}
\end{equation}
where the second to last inequality used Lemma \ref{lemma: eqn: L2 norm of f as sum of M equally spaced points of f} and Lemma \ref{lemma: final bound mass of f outiside subset S}. The last inequality follows from the lower bound $\| f \|^2 \geq \frac{1}{\sqrt{2 M}} \left(\int_{-\frac{M^{\delta}}{2}}^{\frac{M^{\delta}}{2}} \frac{\sqrt{2}}{\sqrt{M}} e^{-\frac{2\pi x^2}{M}} dx - \frac{2 \sqrt{2}}{\sqrt{M}}\right) \geq \frac{1}{2\sqrt{2 M}}$, which is valid when we choose $M$ large enough. 

For $M$ large, the inequality \eqref{eqn: lower bound on Z_A(f)} becomes

\begin{equation}
    \label{eqn: final bound on Z_g_A}
    Z_\mathcal{A}\left(\frac{f}{\left\|f \right\|}\right) \geq  e^{-340\, e^{-\frac{\pi}{4} M^{2\delta - 1}}},
\end{equation}
where we use the inequality $1 - \frac{x}{2} \geq e^{-x}$ for small $x$.

If we use the unit norm function $\frac{f}{\left\|f\right\|}$ in the definition \eqref{def: u as convolution} of $u_k$, then $\|u_k\|^2 = 1$ by equation \eqref{eqn: norm squared of u_k}. Therefore, Lemma \ref{lemma: lower bound on norm of truncated fourier transform} and the lower bound \eqref{eqn: final bound on Z_g_A} give Theorem \ref{thm: upper bound on beta for delta bigger than half}.

\end{proof}

\section{A fractal uncertainty principle for dilated discrete Cantor sets}

We give a proof of Theorem \ref{thm: FUP for dilated discrete fourier transform} in this section. Let $N \in \mathbb N$, $h = (2 \pi N)^{-1}$, and $X$ and $Y$ be subsets of $\mathbb Z_N$. Lemma \ref{lemma: Discrete to continuous FUP} is obtained by arguing similarly to Proposition 5.8 in \cite{DJ18}.

\begin{lemm}
\label{lemma: Discrete to continuous FUP}
    Define the semiclassical Fourier transform 
    \begin{equation}
        \label{def: semiclassical FT}
        \mathcal{F}_h f (\xi)= \frac{1}{\sqrt{2 \pi h}} \int_{\mathbb R} f(x) e^{-\frac{i x \xi}{h}} dx \,.
    \end{equation}
    Then there exists a global constant $C$ such that
    \begin{equation}
    \label{eqn: Discrete to continuous FUP}
        \left\| \mathbbm{1}_{X} \mathcal{F}_N \mathbbm{1}_{Y} \right\| \leq C \left\| \mathbbm{1}_{N^{-1}(X + [0,1])} \mathcal{F}_{h} \mathbbm{1}_{N^{-1}(Y+[0,1])} \right\|_{L^2(\mathbb R) \to L^2(\mathbb R)} \,.
    \end{equation}
\end{lemm}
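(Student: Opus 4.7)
I plan to lift a discrete test function $v \in \mathbb{C}^N$ with $\operatorname{supp} v \subset Y$ to a continuous function $V \in L^2(\mathbb{R})$ supported in $N^{-1}(Y+[0,1])$, apply the continuous FUP on the enlarged sets $A := N^{-1}(X+[0,1])$ and $B := N^{-1}(Y+[0,1])$, and then recover the discrete Fourier samples of $v$ from $\mathcal{F}_h V$ via a local Plancherel--P\'olya-type sampling estimate. Fix once and for all a bump $\phi \in L^2(\mathbb{R})$ with $\operatorname{supp}\phi \subset [0,1]$, $\|\phi\|_{L^2} = 1$, and $c_0 := \inf_{\xi \in [0,1]} |\tilde\phi(\xi)| > 0$, where $\tilde\phi(\xi) := \int \phi(y)e^{-2\pi i y \xi}\,dy$. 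An explicit choice is $\phi = \sqrt 2\,\mathbbm{1}_{[1/4,3/4]}$, for which $|\tilde\phi(\xi)| = \sqrt{2}\,\sin(\pi\xi/2)/(\pi\xi) \geq \sqrt{2}/\pi$ on $[0,1]$. For $v$ as above with $\|v\|_{\ell^2} = 1$, set
\[
    V(x) := \sqrt N \sum_{j \in Y} v(j)\,\phi(Nx - j).
\]
Since the translates $\phi(N\,\cdot - j)$ are supported in the disjoint intervals $[j/N, (j+1)/N]$, we have $\operatorname{supp} V \subset N^{-1}(Y+[0,1])$ and $\|V\|_{L^2} = 1$.

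A change of variable (with $1/h = 2\pi N$) identifies
\[
    \mathcal{F}_h V(\xi) = \tilde\phi(\xi)\, P_v(\xi),\qquad P_v(\xi) := \sum_j v(j)\,e^{-2\pi i j\xi},
\]
which is a trigonometric polynomial of degree $\leq N-1$ satisfying the sampling identity $P_v(k/N) = \sqrt N\,\mathcal{F}_N v(k)$. Consequently
\[
    \|\mathbbm{1}_X \mathcal{F}_N v\|_{\ell^2}^2 = \tfrac{1}{N}\sum_{k \in X} |P_v(k/N)|^2,\qquad \|\mathbbm{1}_A \mathcal{F}_h V\|_{L^2}^2 \geq c_0^2 \int_A |P_v(\xi)|^2\,d\xi,
\]
and the continuous FUP gives $\|\mathbbm{1}_A \mathcal{F}_h V\|_{L^2}^2 \leq \|\mathbbm{1}_A \mathcal{F}_h \mathbbm{1}_B\|^2$ because $\operatorname{supp} V \subset B$. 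Thus the lemma reduces to the sampling estimate
\[
    \sum_{k \in X} |P_v(k/N)|^2 \leq C_0 N \int_A |P_v(\xi)|^2\, d\xi,
\]
which, since $A$ is the disjoint union $\bigsqcup_{k \in X}[k/N,(k+1)/N]$, follows from the single-window inequality
\[
    |P(\xi_0)|^2 \leq C_0 N \int_{\xi_0}^{\xi_0 + 1/N} |P(\xi)|^2\, d\xi \qquad (*)
\]
for every trigonometric polynomial $P$ of degree $\leq N-1$ and every $\xi_0 \in \mathbb{R}$.

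To establish $(*)$, substituting $\xi = \xi_0 + y/N$ reduces it to showing $|Q(0)|^2 \leq C_0 \int_0^1 |Q(y)|^2\,dy$ for $Q(y) = \sum_{j=0}^{N-1} a_j e^{2\pi i j y/N}$. By duality the sharp constant equals $e^{*} G^{-1} e$, where $G_{jk} := \int_0^1 e^{2\pi i (j-k)y/N}\,dy$ is a Hermitian Toeplitz matrix whose symbol (via Poisson summation) concentrates as $N\,\mathbbm{1}_{[0,2\pi/N]}$ on $[-\pi,\pi]$; this shows that $G$ has a single eigenvalue of size $\sim N$ whose eigenvector is close to the all-ones vector $e$, so that eigenvector absorbs $e$ and yields $e^{*} G^{-1} e = O(1)$ uniformly in $N$. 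Combining the estimates gives
\[
    \|\mathbbm{1}_X \mathcal{F}_N v\|^2 \leq (C_0/c_0^2)\,\|\mathbbm{1}_A \mathcal{F}_h \mathbbm{1}_B\|_{L^2 \to L^2}^2,
\]
and taking the supremum over unit $v$ supported in $Y$ proves the lemma with $C = \sqrt{C_0}/c_0$. The main technical obstacle is the local Plancherel--P\'olya inequality $(*)$: because a trigonometric polynomial of degree $N-1$ oscillates exactly at the scale $1/N$ of the integration window, one must verify carefully via the Toeplitz spectral analysis sketched above (or an equivalent Nikolski-type inequality on short arcs of the unit circle) that the constant $C_0$ does not degenerate as $N \to \infty$.
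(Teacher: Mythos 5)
Your setup (lifting $v$ to $V=\sqrt N\sum_j v(j)\phi(N\cdot-j)$, the identity $\mathcal F_hV=\tilde\phi\,P_v$, and the reduction to a sampling estimate for $P_v$) is correct, but the single-window inequality $(*)$ on which everything rests is \emph{false} with a constant uniform in $N$; the sharp constant there grows like $N^{2}$, not $O(1)$. To see this, substitute $\xi=\xi_0+y/N$ and write $Q(y)=\sum_{j=0}^{N-1}a_je^{2\pi ijy/N}=R(z)$ with $z=e^{2\pi iy/N}$, so that $R$ is an arbitrary polynomial of degree $\le N-1$ restricted to an arc of angular length $2\pi/N$ starting at $z=1$. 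Rescaling $\zeta=\tfrac{N}{2\pi i}(z-1)\approx y$, this space is (up to an $O(1/N)$ perturbation of the curve and of the measure) the space of all polynomials of degree $\le N-1$ on the segment $[0,1]$, and the quantity you need, $\min\{\int_0^1|Q|^2\,dy: Q(0)=1\}$, is the endpoint Christoffel function $\lambda_{N-1}(0)$. Its reciprocal is the reproducing kernel on the diagonal, $\sum_{k=0}^{N-1}|p_k(0)|^2=\sum_{k=0}^{N-1}(2k+1)=N^2$ for the shifted Legendre system; equivalently $e^{*}G^{-1}e\sim N^{2}$. This is exactly where your Toeplitz heuristic breaks: the top eigenvector of $G$ is \emph{close} to $e/\sqrt N$ but not equal to it, and the residual components of $e$ along the near-kernel of $G$ (the prolate eigenvalues plunge essentially to zero, since the time--bandwidth product here is $1$), weighted by the reciprocals of those tiny eigenvalues, contribute the extra factor $N^{2}$. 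A degree-$(N-1)$ trigonometric polynomial can be strongly peaked at one endpoint of a window of length $1/N$ while having small $L^2$ mass on that window, so pointwise recovery of the Fourier samples window-by-window cannot work with a uniform constant, and your final bound degenerates by a factor up to $N^{2}$ (indeed already for $X=\{0\}$ the reduction fails).

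The paper avoids this obstruction entirely: instead of a sharp cutoff $\phi$ on the physical side and pointwise sampling on the Fourier side, it factors $\mathcal F_N=\frac1N T\mathcal F_hT^{*}$ (sampling and Dirac comb) and conjugates by $\psi(\pm ih\partial)\psi^{-1}$ for a nonvanishing Schwartz function $\psi$ with $\hat\psi$ supported in $(-\pi,\pi)$. The band-limitation of $\hat\psi$ controls the support enlargement by $\frac1{2N}$ (so the sets only grow to $N^{-1}(X+[-\tfrac12,\tfrac12])$ etc.), the nonvanishing of $\psi$ makes the division harmless, and the operators $T\psi(ih\partial)\psi^{-1}$, $\psi(-ih\partial)\psi^{-1}T^{*}$ have norms $O(\sqrt N)$, which combine with the $\frac1N$ to give a uniform constant. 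If you want to keep your test-function framework, you would need to replace $\phi$ by something whose Fourier transform is bounded below \emph{and} band-limited in the appropriate sense --- which is the tension the paper's $\psi(\pm ih\partial)\psi^{-1}$ construction is designed to resolve.
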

\begin{proof}
    Let $\mathcal S(\mathbb R)$ be the space of Shwartz functions. We construct $\psi \in \mathcal S(\mathbb R)$ such that its Fourier transform $\hat{\psi}$ defined by \eqref{eqn: Fourier transform} has support in $(-\pi, \pi)$ and $\psi(x) \neq 0$ for all $x \in \mathbb R$. Take $\psi_0 \in \mathcal S (\mathbb R)$ with $\supp \hat{\psi}_0 \subset (-\pi, \pi)$. By the Paley-Wiener theorem, $\psi_0$ extends to a holomorphic function on the complex plane, which has countably many zeros. As a result, there is $y \in \mathbb R$ such that $\psi_0(x + i y) \neq 0$ for every $x \in \mathbb R$. $\psi(x) = \psi_0(x + i y) $ is the desired function, and without loss of generality, we can assume that $|\psi| \geq 1$ on $[-1,1]$.

    Define the following operators
    \begin{equation} 
        \label{def: Fourier multipliers operators}
        \psi(ih \partial) = \mathcal F_h \psi \mathcal F_h^* \text{ , } \psi(-ih\partial) = \mathcal F_h^* \psi \mathcal F_h : L^2(\mathbb R) \rightarrow L^2(\mathbb R),
    \end{equation}
    where $\mathcal F_h^*$ is the adjoint of $\mathcal F_h$ with respect to the $L^2(\mathbb R)$ inner product. A computation shows that the above operators are given by 
    \begin{equation}
        \label{eqn: Fourier multipliers as convolutions}
        \psi(\pm i h \partial)f = E_{\pm} * f \text{,}
    \end{equation}
    where $*$ is the convolution operator and 
    \begin{equation}
        \label{eqn: kernel of Fourier multipliers}
        E_{\pm}(x) = \frac{1}{2\pi h} \hat{\psi}\left( \pm \frac{x}{h} \right).
    \end{equation}
    The condition $\supp \hat{\psi} \subset (-\pi, \pi)$ and equation \eqref{eqn: Fourier multipliers as convolutions} imply that 
    \begin{equation}
        \label{eqn: support enlargement by Fourier multiplier}
        \supp \psi(\pm i h \partial)f \subset \supp f + \left[ -\frac{1}{2N}, \frac{1}{2N} \right].
    \end{equation}

    Let $\mathcal E' (\mathbb R)$ be the space of compactly supported distributions on $\mathbb R$. One can check that 
    \begin{equation}
        \label{eqn: DFT in terms of semiclassical FT}
        \mathcal F_N = \frac{1}{N} T \mathcal F_h T^*,
    \end{equation}
    where $T: C^{\infty}(\mathbb R) \rightarrow \mathbb C^N$ and $T^*: \mathbb C^N \rightarrow \mathcal E' (\mathbb R)$ are given by 
    \begin{equation}
        \label{eqn: Discrete to continuous operators}
        \begin{split}
            Tf(j) &= f\left( \frac{j}{N}\right) \text{ and } T^*f(x) = \sum_{j = 0}^{N-1} f(j) \delta\left(x- \frac{j}{N} \right).
        \end{split}
    \end{equation}
    Using relations \eqref{def: Fourier multipliers operators} and \eqref{eqn: DFT in terms of semiclassical FT}, we have that
    \begin{equation}
        \label{eqn: truncated DFT in terms of semiclassical FT and fourier multipliers}
        \begin{split}
            \mathbbm{1}_{X} \mathcal{F}_N \mathbbm{1}_{Y} &= \frac{1}{N} \mathbbm{1}_{X} T \psi(ih \partial) \psi^{-1} \mathcal F_h \psi(-ih \partial) \psi^{-1} T^* \mathbbm{1}_{Y} \\
             &= \frac{1}{N}  T \psi(ih \partial) \psi^{-1}  \mathbbm{1}_{N^{-1}\left(X + \left[-\frac{1}{2}, \frac{1}{2}\right]\right)} \mathcal F_h \mathbbm{1}_{N^{-1}\left(Y + \left[-\frac{1}{2}, \frac{1}{2}\right]\right)} \psi(-ih \partial) \psi^{-1} T^* ,
        \end{split}
    \end{equation}
    where the last equality follows from the support property \eqref{eqn: support enlargement by Fourier multiplier}. A quick computation shows that 
    
    \begin{equation*}
        \max \left(\left\| \psi(-ih \partial) \psi^{-1} T^* \right\|_{\mathbb C^N \rightarrow L^2(\mathbb R)} \textbf{ , } \left\| T \psi(i h\partial) \psi^{-1} \right\|_{L^2(\mathbb R) \rightarrow \mathbb C^N} \right) \leq \sqrt{N}\| \psi \|_{L^2(\mathbb R)} \, .
    \end{equation*}
    Substituting this upper bound in equation \eqref{eqn: truncated DFT in terms of semiclassical FT and fourier multipliers} gives 
    \begin{equation*}
       \begin{split}
           \left\| \mathbbm{1}_{X} \mathcal{F}_N \mathbbm{1}_{Y} \right\|_{\mathbb C^N \rightarrow \mathbb C^N} &\leq C \left\| \mathbbm{1}_{N^{-1}\left(X + \left[-\frac{1}{2}, \frac{1}{2}\right]\right)} \mathcal{F}_{h} \mathbbm{1}_{N^{-1}\left(Y + \left[-\frac{1}{2}, \frac{1}{2}\right]\right)} \right\|_{L^2(\mathbb R) \rightarrow L^2(\mathbb R)}\\
           & \leq C \left\| \mathbbm{1}_{N^{-1}(X + [0,1])} \mathcal{F}_{h} \mathbbm{1}_{N^{-1}(Y+[0,1])} \right\|_{L^2(\mathbb R) \rightarrow L^2(\mathbb R)} \,,
       \end{split}
    \end{equation*}
    where $C$ can be taken to be $\| \psi \|^2_{L^2(\mathbb R)}$ and the last inequality follows because given subsets $A, B \subset \mathbb R$, the operator norm $\| \mathbbm{1}_A \mathcal F_h \mathbbm{1}_B\|_{L^2(\mathbb R) \rightarrow L^2(\mathbb R)}$ does not change if we replace $A$ and $B$ by the shifted sets $A + a$ and $B + b$, for some $a,b \in \mathbb R$. 
\end{proof}

In Lemma \ref{lemma: upper bound on truncated dilated fourier transform}, we choose specific sets $X,Y \subset Z_N$.
\begin{lemm}
    \label{lemma: upper bound on truncated dilated fourier transform}
    If $N = \alpha M^k \in \mathbb N$ for $M, k \in \mathbb N$ and $\alpha \in \mathbb Q \cap [1,M)$, then there exists a global constant $C$ such that 
    \begin{equation}
    \label{eqn: bound on truncated DFT by dilated cantor sets}
    \left\| \mathbbm{1}_{\mathcal C_k(N)} \mathcal{F}_N \mathbbm{1}_{\mathcal C_k(N)} \right\|^2 \leq \alpha C G^k ,
    \end{equation}
    where $\mathcal C_k(N)$ is defined by \eqref{def: dilated cantor sets},
    \begin{equation}
    \label{eqn: average of F_1 on translated C_1}
        G = M^{-(1-\delta)} \sup_{x \in \mathbb{R}} \sum_{\eta \in \frac{\alpha}{M} \mathcal A} \left( \sup_{ x + \eta + \left[0,\frac{\alpha (\max{\mathcal A}+1)}{M^2}\right] } |F_1| \right) ,
    \end{equation}
    $\mathcal A$ is the alphabet used to construct $\mathcal C_k(N)$, and $F_k$ is given by 
    \begin{equation}
    \label{def: F_k exponential sum on C_k}
        F_k(x) = M^{-\delta k} \sum_{l \in \mathcal{C}_k} e^{-2 \pi i l x} \text{ .}
    \end{equation}
\end{lemm}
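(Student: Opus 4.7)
The plan has three stages: reduce the discrete estimate to a continuous one via Lemma \ref{lemma: Discrete to continuous FUP}, use the self-similar structure of $\mathcal C_k$ to obtain an explicit factorization of $\mathcal F_h \mathbbm 1_{\widetilde W}$ where $\widetilde W := M^{-k}\mathcal C_k + [0, 2/N]$, and bound the resulting operator norm by Schur's test combined with a scale-by-scale extraction of one factor of $G$ per level of the Cantor construction.

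First, applying Lemma \ref{lemma: Discrete to continuous FUP} with $X = Y = \mathcal C_k(N)$ and $h = (2\pi N)^{-1}$, and using $\lceil \alpha j\rceil \in [\alpha j, \alpha j + 1)$ to obtain $N^{-1}(\mathcal C_k(N) + [0,1]) \subset \widetilde W$, it suffices to bound $\|\mathbbm 1_{\widetilde W}\mathcal F_h \mathbbm 1_{\widetilde W}\|^2_{L^2\to L^2}$. Writing $\mathbbm 1_{\widetilde W}$ (up to overlap corrections absorbed in the eventual constant $\alpha C$) as the convolution $\mathbbm 1_{[0,2/N]} * \sum_{a \in M^{-k}\mathcal C_k}\delta_a$, then invoking $\mathcal F_h(f * g) = \sqrt{2\pi h}\,\mathcal F_h f \cdot \mathcal F_h g$ together with the product formula $F_k(x) = \prod_{r=0}^{k-1} F_1(M^r x)$ (a direct consequence of $\mathcal C_k = \mathcal A + M\mathcal A + \cdots + M^{k-1}\mathcal A$ and \eqref{def: F_k exponential sum on C_k}), one arrives at
$$|\mathcal F_h \mathbbm 1_{\widetilde W}(\xi)| = |\phi(\xi)|\, M^{\delta k}\prod_{r=0}^{k-1}|F_1(M^r \alpha \xi)|,$$
where $\phi := \mathcal F_h \mathbbm 1_{[0,2/N]}$ obeys the sinc-type bound $|\phi(\xi)| \leq C\min(N^{-1/2}, N^{-1/2}|\xi|^{-1})$.

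Next I would pass to $\|\mathbbm 1_{\widetilde W}\mathcal F_h \mathbbm 1_{\widetilde W}\|^2 \leq \|\mathbbm 1_{\widetilde W}\mathcal F_h \mathbbm 1_{\widetilde W}\mathcal F_h^*\mathbbm 1_{\widetilde W}\|_{op}$; the composed operator has a convolution-type kernel proportional to $\mathcal F_h\mathbbm 1_{\widetilde W}(\xi - \eta)$, and Schur's test combined with the substitution $\tau = \xi - \eta$ reduces the problem to showing
$$\sqrt N\, M^{\delta k}\sup_{\xi \in \widetilde W}\int_{\xi - \widetilde W}|\phi(\tau)|\prod_{r=0}^{k-1}|F_1(M^r \alpha \tau)|\,d\tau \;\leq\; \alpha C G^k.$$
I would estimate this integral by iteratively peeling off one Cantor scale at a time. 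The top-level decomposition $\widetilde W = \bigcup_{a \in \mathcal A}(a/M + \widetilde W^{(k-1)})$ (with $\widetilde W^{(k-1)} := M^{-k}\mathcal C_{k-1} + [0,2/N]$) induces a partition of $\xi - \widetilde W$ into $|\mathcal A|$ translates of a $(k-1)$st-level set. On the block indexed by $a$, $|F_1(\alpha \tau)|$ is bounded by its supremum over a window $\eta + [0, \alpha(\max\mathcal A + 1)/M^2]$ for the corresponding $\eta \in \frac{\alpha}{M}\mathcal A$; summing over $a$ and balancing against the $M^{\delta k}$ prefactor (distributing one factor of $M^{-(1-\delta)}$ per level) produces $M^{-(1-\delta)}\sum_\eta \sup|F_1| \leq G$. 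The remaining integral is of the same form at level $k - 1$ but with an updated $x$-shift, and the $\sup_x$ in the definition \eqref{eqn: average of F_1 on translated C_1} of $G$ precisely absorbs this shift, so the iteration closes. Performing it $k$ times produces $G^k$, while the envelope $|\phi|$ and the overlap/rounding corrections combine into the multiplicative factor $\alpha C$.

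I expect the main obstacle to be this iterative step: correctly matching the top-level decomposition of $\xi - \widetilde W$ with the windows in \eqref{eqn: average of F_1 on translated C_1}, controlling the non-periodic envelope $|\phi|$ (which does not factor through the iteration and so must be handled essentially once, globally, rather than once per scale), and tracking the linear factor $\alpha$ arising both from the overlap of the intervals $[j/M^k, j/M^k + 2/N]$ when $\alpha$ is small and from the $\lceil \alpha j\rceil$ rounding error.
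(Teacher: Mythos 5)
Your plan is, in outline, the paper's proof: the reduction via Lemma \ref{lemma: Discrete to continuous FUP} with $N^{-1}(\mathcal C_k(N)+[0,1])$ contained in a thickened copy of $M^{-k}\mathcal C_k$, the factorization of the Fourier transform of that thickened set as (envelope) $\times\, M^{\delta k}F_k(\alpha\,\cdot)$ coming from its convolution structure, Schur's test applied to $\mathbbm 1\,\mathcal F_h\mathbbm 1\,\mathcal F_h^*\mathbbm 1$, and a scale-by-scale peeling based on $F_k(x)=F_1(x)F_{k-1}(Mx)$ and $\mathcal C_k=\mathcal C_{k-1}+M^{k-1}\mathcal A$ that extracts one factor $M^{-(1-\delta)}\sum_\eta\sup|F_1|$ per level. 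The one point you yourself flag as the main obstacle --- matching the decomposition of $\xi-\widetilde W$ with the windows in \eqref{eqn: average of F_1 on translated C_1} --- is exactly where your version, as written, does not close. If the thickening intervals $[0,2/N]$ are carried through the iteration as part of the \emph{domain} of integration, then at the stage where $m$ digits remain the peeled factor $|F_1|$ is supremized over a window of length $\alpha(\max\mathcal A+1)M^{-2}+2M^{-m}$ rather than $\alpha(\max\mathcal A+1)M^{-2}$. Covering the enlarged window by translates of the correct one and using the outer $\sup_x$ costs a constant factor at \emph{each} level (and a factor as large as $M^{1-\delta}$ at the last level, where the leftover interval of length $\sim M^{-1}$ dwarfs the window $\alpha(\max\mathcal A+1)M^{-2}$). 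This yields a bound of the shape $(CG)^k$, not $\alpha CG^k$ with a global constant. The loss is $N^{o(1)}$ and hence harmless for Theorem \ref{thm: FUP for dilated discrete fourier transform}, but it does not prove \eqref{eqn: bound on truncated DFT by dilated cantor sets} as stated.

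The repair is small and is precisely the paper's move: after using $|\phi|\lesssim N^{-1/2}$ to cancel the $\sqrt N$ from Schur's kernel, absorb the interval component of the domain into a single global shift \emph{before} iterating, via
\begin{equation*}
\int_{\xi-\widetilde W}\prod_{r=0}^{k-1}\bigl|F_1(M^r\alpha\tau)\bigr|\,d\tau
\;\leq\;\int_0^{2/N}\sum_{j\in\mathcal C_k}\bigl|F_k\bigl(\alpha(\xi-s-M^{-k}j)\bigr)\bigr|\,ds
\;\leq\;\frac{2}{N}\,\sup_{x\in\mathbb R}\sum_{j\in\mathcal C_k}\bigl|F_k\bigl(x+\alpha M^{-k}j\bigr)\bigr|.
\end{equation*}
The induction then runs on the purely discrete quantity $S_k=M^{-(1-\delta)k}\sup_x\sum_{j\in\mathcal C_k}|F_k(x+\alpha M^{-k}j)|$ (this is \eqref{def: S_k average of F_k on shifted cantor set}), in which the only windows are those of length $\alpha M^{-k}\max\mathcal C_{k-1}\leq\alpha(\max\mathcal A+1)M^{-2}$ produced by taking $\sup_{l\in\mathcal C_{k-1}}$ of the peeled $F_1$ factor; these match \eqref{eqn: average of F_1 on translated C_1} exactly and give $S_k\leq G\,S_{k-1}\leq G^k$ with no per-level loss. (A second, minor point you already note: when $\alpha<2$ the intervals $M^{-k}j+[0,2/N]$ overlap, so $\mathcal F_h\mathbbm 1_{\widetilde W}$ is not literally the product you write; replace $\mathbbm 1_{\widetilde W}$ by the dominating weight $\mathbbm 1_{[0,2/N]}*\sum_j\delta_{M^{-k}j}$, for which the identity is exact and the Schur bound only increases.) With these two adjustments your argument coincides with the paper's.
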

\begin{proof}

    From the definition \eqref{def: dilated cantor sets} of $\mathcal{C}_k(N)$, it is clear that 
    $$
    N^{-1} \left(\mathcal C_k(N) + [0,1] \right) \subset N^{-1} \left(\alpha \mathcal C_k + [0,2] \right) \subset M^{-k} \left( \mathcal C_k + [0,2] \right). 
    $$
    Lemma \ref{lemma: Discrete to continuous FUP} and the previous containment give 
    \begin{equation}
    \label{eqn: apply discrete to continuous FUP}
    \begin{split}
        \left\| \mathbbm{1}_{\mathcal C_k(N)} \mathcal{F}_N \mathbbm{1}_{\mathcal C_k(N)} \right\| &\leq C \left\| \mathbbm{1}_{M^{-k} \left( \mathcal C_k + [0,2] \right)} \mathcal{F}_{h} \mathbbm{1}_{M^{-k} \left( \mathcal C_k + [0,2] \right)} \right\|_{L^2(\mathbb R) \rightarrow L^2(\mathbb 
        R)} \\
        & \leq  4C \left\| \mathbbm{1}_{M^{-k} \left( \mathcal C_k + [0,1] \right)} \mathcal{F}_{h} \mathbbm{1}_{M^{-k} \left( \mathcal C_k + [0,1] \right)} \right\|_{L^2(\mathbb R) \rightarrow L^2(\mathbb 
        R)},
    \end{split}
    \end{equation}
    where the last inequality follows by covering $\mathcal C_k + [0,2]$ with two shifted copies of $\mathcal C_k + [0,1]$ and using the triangle inequality. 
    
    Let $\Omega = M^{-k} \left( \mathcal C_k + [0,1] \right)$. We use Schur's lemma to bound the operator norm  $\left\| \mathbbm{1}_{\Omega} \mathcal{F}_{h} \mathbbm{1}_{\Omega} \mathcal{F}_{h}^* \mathbbm{1}_{\Omega} \right\|_{L^2(\mathbb R) \rightarrow L^2(\mathbb R)}$ as follows:
    \begin{equation}
    \label{eqn: apply schur's lemma}
        \begin{split}
            \left\| \mathbbm{1}_{\Omega} \mathcal{F}_{h} \mathbbm{1}_{\Omega} \right\|_{L^2(\mathbb R) \rightarrow L^2(\mathbb R)}^2 & \leq \frac{1}{2 \pi h} \sup_{x \in \Omega} \int_{\Omega} \left| \widehat{\mathbbm{1}}_{\Omega}\left( \frac{y-x}{h} \right) \right| dy \\
            & \leq \alpha \sup_{x \in \mathcal C_k + [0,1]} \int_0^1 \left(\sum_{j \in \mathcal C_k } \left| \widehat{\mathbbm{1}}_{\Omega}\left( 2 \pi \alpha (j + y -x)\right) \right| \right)dy \\
            &\leq  \alpha \sup_{x \in \mathcal C_k + [-1,1]} \sum_{j \in \mathcal C_k } \left| \widehat{\mathbbm{1}}_{\Omega}\left( 2 \pi \alpha (j-x)\right) \right| ,
        \end{split} 
    \end{equation}
    where $\widehat{\mathbbm{1}}_{\Omega}$ is the Fourier transform of the indicator function of $\Omega$ defined by \eqref{eqn: Fourier transform}. A computation shows that 
    \begin{equation*}
        \begin{split}
            \widehat{\mathbbm{1}}_{\Omega}(\xi) = M^{-(1-\delta)k} F_k \left( \frac{M^{-k} \xi}{2 \pi} \right) \int_0^1 e^{-iM^{-k}x \xi} dx,
        \end{split}
    \end{equation*}
    where $F_k$ is defined by \eqref{def: F_k exponential sum on C_k}.
    Substituting this expression for $\widehat{\mathbbm{1}}_{\Omega}$ in relation \eqref{eqn: apply schur's lemma} and combining the resulting inequality with expression \eqref{eqn: apply discrete to continuous FUP} gives that 
    
    \begin{equation}
    \label{eqn: intermediate bound on FUP dilated cantor sets}
        \left\| \mathbbm{1}_{\mathcal C_k(N)} \mathcal{F}_N \mathbbm{1}_{\mathcal C_k(N)} \right\|^2 \leq 16C^2\alpha M^{-(1-\delta)k} \sup_{x \in \mathcal C_k + [-1,1]} \sum_{j \in \mathcal C_k} \left|F_k\left( \alpha M^{-k} (j -  x) \right) \right| .
    \end{equation}
    
    Define  
    \begin{equation}
    \label{def: S_k average of F_k on shifted cantor set}
    S_k = M^{-(1-\delta)k} \sup_{x \in \mathbb{R}} \sum_{j \in \mathcal{C}_k} \left|F_k\left( x + \alpha M^{-k}j\right) \right|.
\end{equation}

Notice that $S_1 \leq G$, where $G$ is defined in \eqref{eqn: average of F_1 on translated C_1}. Using the decomposition $\mathcal{C}_k = \mathcal{C}_1 + M \mathcal{C}_{k-1}$ in the definition \eqref{def: F_k exponential sum on C_k} of $F_k$, we get the recursive relation 
    \begin{equation}
    \label{eqn: recursive relation of F_k}
        \begin{split}
            F_k(x) = F_1(x)F_{k-1}(Mx) .
        \end{split}
    \end{equation}
    In the expression \eqref{def: S_k average of F_k on shifted cantor set} defining $S_k$, we use $\mathcal{C}_k = \mathcal{C}_{k-1} + M^{k-1}\mathcal{A}$ and the recursive formula \eqref{eqn: recursive relation of F_k} for $F_k$ to get
    \begin{equation*}
        \begin{split}
            S_k \leq M^{-(1-\delta)k} \sup_{x\in \mathbb{R}}\sum_{s \in \mathcal{A}} &\left[ \sup_{l \in \mathcal{C}_{k-1}} \left|F_1\left(x + \alpha M^{-k}l+ \alpha M^{-1}s\right)\right| \right. \\
            & \times \left. \left(\sum_{l \in \mathcal{C}_{k-1} }  \left|F_{k-1}\left(Mx + \alpha M^{-(k-1)}l + \alpha s\right)\right| \right) \right] .
        \end{split}
    \end{equation*}
    Notice that for each $x \in \mathbb{R}$ and $s \in \mathcal A$, 
    \begin{equation*}
        \begin{split}
            \sum_{l \in \mathcal{C}_{k-1} }  \left|F_{k-1}\left(Mx + \alpha M^{-(k-1)}l + \alpha s\right)\right| &\leq \sup_{x \in \mathbb{R}} \sum_{l \in \mathcal{C}_{k-1} }  \left|F_{k-1}\left(x + \alpha M^{-(k-1)}l\right)\right|\\
            & \leq M^{(1-\delta)(k-1)}S_{k-1} \, .
        \end{split}
    \end{equation*}
    Consequently, we get 
    \begin{equation*}
        \begin{split}
            S_k & \leq S_{k-1} \left( M^{-(1-\delta)} \sup_{x\in \mathbb{R}}\sum_{s \in \mathcal{A}} \sup_{l \in \mathcal{C}_{k-1}} \left|F_1\left(x + \alpha M^{-k}l+ \alpha M^{-1}s\right)\right| \right)\\
            & \leq S_{k-1} \left( M^{-(1-\delta)} \sup_{x\in \mathbb{R}}\sum_{s \in \mathcal{A}} \sup_{y \in \left[0,\frac{\alpha (\max{\mathcal A}+1)}{M^2}\right]} \left|F_1\left(x + y + \alpha M^{-1}s\right)\right| \right)\\
            & \leq S_{k-1} G,
        \end{split}
    \end{equation*}
    where the second to last inequality follows because $ \max{\mathcal C_{k-1}} \leq \left(\max{\mathcal A} + 1 \right) M^{k-2}$ and $G$ is given by \eqref{eqn: average of F_1 on translated C_1}. Repeating this argument inductively gives 
    \begin{equation*}
        S_k \leq G^k.
    \end{equation*}
    The above inequality, definition \eqref{def: S_k average of F_k on shifted cantor set} of $S_k$,  and inequality \eqref{eqn: intermediate bound on FUP dilated cantor sets} imply this lemma.
\end{proof}

Next, we get an upper bound for $G$ defined in \eqref{eqn: average of F_1 on translated C_1}. The argument used to get this upper bound in Proposition \ref{prop: upper bound on G} is partly inspired by the proof of Theorem 8.1 in \cite{IK04}. 

\begin{prop}
\label{prop: upper bound on G}
Let $M\in \mathbb N$ and $\mathcal A = \left\{0, 1, \dots , M^{\delta} -1 \right\}$, where $M^{\delta} \in \mathbb N$ and $0 < \delta \leq \frac{1}{2}$. Suppose that $\alpha \in \mathbb Q \cap [1,M)$. If $\frac{b}{q} \in \mathbb Q$ is an irreducible fraction such that 
\begin{equation}
\label{eqn: rational approximation of alpha over M}
    0 < q \leq M^{\delta} \text{ and } \left|\frac{\alpha}{M} - \frac{b}{q}\right| \leq \frac{1}{q M^{\delta}} \text{,}
\end{equation}
then
    \begin{equation}
    \label{eqn: upper bound on G}
        G \leq \frac{12}{M^{1-\delta}} \left( \frac{M^{\delta}}{q} + \log q \right),
    \end{equation}
    where $G$ is given by \eqref{eqn: average of F_1 on translated C_1}.
\end{prop}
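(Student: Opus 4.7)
\textbf{Proof plan for Proposition~\ref{prop: upper bound on G}.} The plan is to reduce the bound on $G$ to a classical-style estimate for exponential sums with a Diophantine denominator, exploiting the geometric-series structure of $F_1$ together with the rational approximation $\alpha/M \approx b/q$ provided by \eqref{eqn: rational approximation of alpha over M}.

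First I would use the explicit Dirichlet-kernel form of $F_1$: since $\mathcal A = \{0,1,\dots,M^\delta-1\}$, the sum defining $F_1$ is geometric, and
$$
|F_1(x)| \;=\; M^{-\delta} \left| \frac{\sin(\pi M^\delta x)}{\sin(\pi x)} \right| \;\leq\; \min\!\left(1,\; \frac{1}{2 M^\delta \|x\|}\right),
$$
where $\|x\|$ denotes the distance from $x$ to $\mathbb Z$. The supremum over $y$ in the interval $J := [0, \alpha M^{\delta-2}]$ costs only an absolute constant: its length satisfies $|J| \leq M^{\delta-1} \leq M^{-\delta}$ (using $\alpha < M$ and $\delta \leq 1/2$), which is at or below the natural oscillation scale of $F_1$, so $\psi(x) := \sup_{y \in J}|F_1(x+y)|$ still obeys $\psi(x) \leq \min(1, C/(M^\delta \|x\|))$ after a bounded enlargement of the exceptional neighborhoods of integers.

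Next I would partition the summation variable by residues modulo $q$. From \eqref{eqn: rational approximation of alpha over M}, write $\alpha/M = b/q + \theta$ with $|\theta| \leq 1/(qM^\delta)$, and set $s = r + mq$ with $r \in \{0,\dots,q-1\}$. Since $b(s-r)/q \in \mathbb Z$ and $|\theta s| \leq 1/q$ for $s \leq M^\delta$, one has $\alpha s/M \equiv br/q + O(1/q) \pmod 1$ uniformly in $s$. For each fixed $r$ the points $\{x + \alpha s/M + y \bmod 1\}$ with $s \equiv r \pmod q$, $y \in J$ therefore lie in a single arc of length $\leq 2/q$ centered at $x + br/q \bmod 1$, and this cluster contains $\lceil M^\delta/q \rceil$ terms. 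As $r$ ranges over $\{0,\dots,q-1\}$, the set $\{br/q \bmod 1\}$ traverses the $q$-th roots $\{0, 1/q, \dots, (q-1)/q\}$ exactly once, since $\gcd(b,q)=1$.

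Cluster by cluster, I would then bound the contribution to $\sum_s \psi(x + \alpha s/M)$ in two regimes. A \emph{far} cluster, whose center has $\|x + br/q\| \geq C'/q$, contributes at most $(M^\delta/q)\cdot C/(M^\delta \|x+br/q\|) = C''/(q\|x+br/q\|)$ by the Step~1 bound. A \emph{near} cluster, handled by the trivial bound $\psi \leq 1$, contributes at most $\lceil M^\delta/q \rceil \leq 2M^\delta/q$; the Diophantine bound $q \leq M^\delta$ and the uniform $1/q$-spacing of the cluster centers ensure that there are only $O(1)$ such near clusters. Summing the far contributions over $r$ reduces to a harmonic sum $\sum_{k=1}^{q/2} 1/k = O(\log q)$, because after translation the centers $x + br/q \bmod 1$ remain $1/q$-equispaced on the circle. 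Combining these estimates yields
$$
\sum_{s=0}^{M^\delta-1} \psi(x + \alpha s/M) \;\leq\; C_1 \Big( \tfrac{M^\delta}{q} + \log q \Big)
$$
uniformly in $x$, and multiplying by $M^{-(1-\delta)}$ delivers \eqref{eqn: upper bound on G}; the explicit constant $12$ is obtained by tracking constants through each step.

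The main obstacle is the quantitative cluster bookkeeping: one must carefully separate the near-integer clusters from the far ones, control how $|J|$ and the arithmetic-progression drift $|\theta s|$ jointly dilate each cluster, and identify the far-cluster sum with a genuinely harmonic series in order to produce exactly the $\log q$ term rather than something coarser. The hypothesis $q \leq M^\delta$ enters twice in an essential way—it keeps each cluster width below $2/q$, and it limits the near-integer clusters to $O(1)$—so that the trivial contribution remains absorbed in the $M^\delta/q$ term that appears in the statement.
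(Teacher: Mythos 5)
Your proposal is correct and follows essentially the same route as the paper's proof: the Dirichlet-kernel bound $|F_1(x)|\leq M^{-\delta}\min(M^{\delta},\|x\|_{\mathbb Z}^{-1})$, absorption of the sup over the short interval into an $O(1/q)$-enlargement, decomposition into residue classes mod $q$ using $\alpha/M=b/q+O(1/(qM^{\delta}))$, a near/far dichotomy over the $1/q$-equispaced cluster centers, and a harmonic sum giving the $\log q$ term. The only difference is cosmetic (the paper tracks intervals $I_j^x$ and a distance $\|I\|_{\mathbb Z}$ rather than a pointwise-dominated function $\psi$), and the explicit constant $12$ would still need to be verified by the bookkeeping you defer.
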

\begin{proof}
    We get an explicit expression for $G$ defined in \eqref{eqn: average of F_1 on translated C_1}, that is,
    \begin{equation}
    \label{eqn: G for specific parameters}
        \begin{split}
            G = \sup_{x \in \mathbb R} & M^{-(1-\delta)} \sum_{j=0}^{M^{\delta}-1} \sup_{I_j^x} |F_1| ,\\
            \text{where } F_1(x)  = \frac{1}{M^{\delta}} \sum_{j=0}^{M^{\delta}-1} e^{-2\pi i  j x}   &\text{ and } I_j^x = x + \frac{\alpha}{M} j + \left[0, \frac{\alpha M^{\delta}}{M^2}\right]. 
        \end{split}
    \end{equation} 
    For $x \in [0,1]$,  
    \begin{equation}
        \label{eqn: upper bound on F_1}
        \begin{split}
            \left|F_1(x)\right| & \leq \frac{1}{M^{\delta}} \left| \frac{\sin \left(\pi M^{\delta} x \right) }{\sin (\pi x)} \right|\\
            &\leq M^{-\delta} \min (M^{\delta},\| x \|_{\mathbb{Z}}^{-1} ),
        \end{split}
    \end{equation}
    where 
\begin{equation}
    \label{def: distance to closest integer}
    \| x \|_{\mathbb{Z}} = \min_{a \in \mathbb{Z}} |x - a|
\end{equation}
and the last inequality in \eqref{eqn: upper bound on F_1} follows from the lower bound
    $$
    \sin (\pi x) \geq 2x(1-x) \geq \| x \|_{\mathbb{Z}} 
    $$
    and upper bound $|F_1| \leq 1$. 

    Let $x \in \mathbb R$. The second inequality in \eqref{eqn: rational approximation of alpha over M} gives 
\begin{equation}
    \label{eqn: I_x_j intervals around rational points}
    I^x_j = x' + \frac{bj+l}{q} + \gamma + \left[0, \frac{\alpha M^{\delta}}{M^2}\right],
\end{equation}
where $0 \leq j < M^{\delta}$, $| \gamma| \leq \frac{1}{q}$, and $x = \frac{l}{q} + x'$ for some $l \in \mathbb{Z}$ and $|x'| \leq \frac{1}{q}$.

Using relation \eqref{eqn: I_x_j intervals around rational points} and recalling that $1 \leq \alpha <M$ and $0 < \delta \leq \frac{1}{2}$, we get that 
$$
I^x_j \subset I_{j,l} = \frac{bj + l}{q} + \left[ -\frac{2}{q}, \frac{3}{q}\right].
$$
Let $B = \{0, \pm 1, \pm 2, -3\}$. If $j$ is such that $bj+l  \notin (B + q \mathbb Z)$, then $I_j^x$ and $I_{j,l}$ do not contain an integer. As a result, 
\begin{equation}
    \label{eqn: lower bound on distance of I^x_j from integers}
    \left\|I^x_j \right\|_{\mathbb Z} \geq \left\| I_{j,l} \right\|_{\mathbb Z} \geq  \min \left( \left\| \frac{bj + l -2}{q}\right\|_{\mathbb{Z}}, \left\| \frac{bj + l + 3}{q}\right\|_{\mathbb{Z}}  \right) > 0,
\end{equation}
where for an interval $I \subset \mathbb R$,  
$$
\| I\|_{\mathbb Z} = \min_{y \in I \text{, } a \in \mathbb Z}|y-a|.
$$
Inequalities \eqref{eqn: upper bound on F_1} and \eqref{eqn: lower bound on distance of I^x_j from integers} give
\begin{equation*}
    \begin{split}
        \frac{1}{M^{1-\delta}} \sum_{j=0}^{M^{\delta}-1} \sup_{I_j^x} |F_1| &  \leq \sum_{bj + l \in (B + q \mathbb Z)}\frac{1}{M^{1-\delta}} \\
        &+ \sum_{bj+l  \notin (B + q \mathbb Z)}  \frac{1}{M}\min \left( \left\| \frac{bj + l -2}{q}\right\|_{\mathbb{Z}}, \left\| \frac{bj + l + 3}{q}\right\|_{\mathbb{Z}}  \right)^{-1} \\
        & \leq   \frac{12 M^{\delta}}{qM^{1-\delta}} + \frac{2M^{\delta}}{qM} \sum_{a = 3}^{q - 4}  \min \left( \left\| \frac{a -2}{q}\right\|_{\mathbb{Z}}, \left\| \frac{a + 3}{q}\right\|_{\mathbb{Z}}  \right)^{-1} ,
    \end{split}
\end{equation*}
where the last inequality follows because for each $a \in \{0, \dots, q-1\}$, there are at most $\frac{M^{\delta}}{q} + 1$ elements $j\in \{0, \dots, M^{\delta}-1\}$ such that $bj + l = a \bmod q$. A computation shows that 
$$
\sum_{a = 3}^{q - 4}  \min \left( \left\| \frac{a -2}{q}\right\|_{\mathbb{Z}}, \left\| \frac{a + 3}{q}\right\|_{\mathbb{Z}}  \right)^{-1} \leq 2 q \log q,
$$
which implies that
$$
\frac{1}{M^{1-\delta}} \sum_{j=0}^{M^{\delta}-1} \sup_{I_j^x} |F_1| \leq \frac{12}{M^{1-\delta}} \left( \frac{M^{\delta}}{q} + \log q \right).
$$
This last inequality gives the desired proposition.
\end{proof}

We can now give the proof of Theorem \ref{thm: FUP for dilated discrete fourier transform}.

\begin{proof}[Proof of Theorem \ref{thm: FUP for dilated discrete fourier transform}]
Applying Proposition~\ref{prop: upper bound on G}, we get that
$$
G \leq \frac{M^{\delta}}{qM^{1-\delta}} \left( 12 + \frac{12 q}{M^{\delta}}\log q \right),
$$
where $G$ is defined in \eqref{eqn: average of F_1 on translated C_1}. Let $\epsilon > 0$ be small. Because $q \leq M^{\delta}$, we can choose $M$ large enough such that $12(1 + \log M) \leq M^{\epsilon}$, and consequently,
$$
G \leq  \frac{1}{qM^{1-2\delta -\epsilon}} \,.
$$
Substituting the above upper bound on $G$ in Lemma \ref{lemma: upper bound on truncated dilated fourier transform} and recalling that $N = \alpha M^k$ and $q \leq M^{\delta}$, we get that
$$
\| \mathbbm{1}_{\mathcal{C}_k(N)} \mathcal{F}_N \mathbbm{1}_{\mathcal{C}_k(N)} \|^2 \leq \frac{1}{N^{(1-2\delta + \gamma -2\epsilon)}} \left( \frac{\alpha^{(2-\delta - 2\epsilon)} C}{M^{\epsilon k} } \right) \;, 
$$
where $C$ is some global constant and $\gamma = \frac{\log q}{\log M}$. Since $1 \leq \alpha < M$, we can find $k_0$ large enough such that for any $k \geq k_0$, 
$$
\frac{\alpha^{(2-\delta - 2\epsilon)} C}{M^{\epsilon k} } \leq 1,
$$
which gives Theorem \ref{thm: FUP for dilated discrete fourier transform}.
\end{proof}


\end{document}